\newtheorem{thm}{Theorem}[section]
\newtheorem{cor}[thm]{Corollary}
\newtheorem{prop}[thm]{Proposition}
\newtheorem{lem}[thm]{Lemma}
\newtheorem{conj}{Conjecture}
\theoremstyle{definition}
\newtheorem{defin}[thm]{Definition}
\newtheorem{rem}[thm]{Remark}
\newtheorem{exa}[thm]{Example}
\numberwithin{equation}{section}
\def\eq#1{{\rm(\ref{#1})}}
\def\Eq#1#2{\ifthenelse{\equal{#1}{*}}
  {\begin{equation*}\begin{aligned}[]#2\end{aligned}\end{equation*}}
  {\begin{equation}\begin{aligned}[]\label{#1}#2\end{aligned}\end{equation}}}
\def\A{\mathscr{A}}
\def\G{\mathscr{G}}
\def\M{\mathscr{M}}
\def\P{\mathscr{P}}
\newcommand{\operator}[1]{\mathop{\vphantom{\sum}\mathchoice
{\vcenter{\hbox{\LARGE $#1$}}}
{\vcenter{\hbox{\Large $#1$}}}{#1}{#1}}\displaylimits}
\def\Mm{\operator{\mathscr{M}}}
\def\Ar{\operator{\mathscr{A}}}       
\newcommand\R{\mathbb{R}}
\newcommand\N{\mathbb{N}}
\newcommand\Z{\mathbb{Z}}
\newcommand\Q{\mathbb{Q}}
\newcommand{\norm}[1]{\left\| #1 \right\| }
\newcommand{\abs}[1]{\left| #1 \right| }
\newcommand{\ceil}[1]{\left\lceil #1 \right\rceil}
\newcommand{\vone}{\textbf1}
\newcommand{\pto}{\xrightarrow{p}}
\DeclareMathOperator{\lcm}{lcm}
\newcommand{\Hc}[2][SKIPPED]{
\ifthenelse{\equal{#2}{}}
  {
    \ifthenelse{\equal{#1}{SKIPPED}}
      {\mathscr{H}}
      {\mathscr{H}(#1)}
  }
  {
    \ifthenelse{\equal{#1}{SKIPPED}}
      {\mathscr{H}_{#2}}
      {\mathscr{H}_{#2}(#1)}
  }
}
\newcommand{\Est}[2][SKIPPED]{
\ifthenelse{\equal{#1}{SKIPPED}}
  {
    \ifthenelse{\equal{#2}{}}
      {\mathscr{C}}
      {\mathscr{C}(#2)}
  }
  {
    \ifthenelse{\equal{#2}{}}
      {\mathscr{C}_{#1}}
      {\mathscr{C}_{#1}(#2)}
  }
}
\author[P. Pasteczka]{Pawe\l{} Pasteczka}
\address{Institute of Mathematics \\ Pedagogical University of Krak\'ow \\ Podchor\k{a}\.zych str. 2, 30-084 Krak\'ow, Poland}
\email{pawel.pasteczka@up.krakow.pl}
\subjclass[2010]{26E60, 26D15, 11P83}
\keywords{Hardy inequality, Hardy constant, Jensen concavity, stability, weighted mean, partitions.
}
\title{On properties of weighted Hardy constant for means}
\begin{document}
\begin{abstract}
For a given weighted mean $\mathscr{M}$ defined on a subinterval of $\mathbb{R}_+$ and a sequence of weights  $\lambda=(\lambda_n)_{n=1}^\infty$ we define a Hardy constant $\mathscr H(\lambda)$ as the smallest extended real number such that
$$ \sum_{n=1}^\infty \lambda_n \mathscr{M}\big((x_1,\dots,x_n),(\lambda_1,\dots,\lambda_n)\big) \le \mathscr H(\lambda) \cdot \sum_{n=1}^\infty \lambda_n x_n \text{ for all }x \in \ell^1(\lambda).$$

The aim of this note is to present a comprehensive study of the mapping $\mathscr H$. For example we prove that it is lower semicontinuous in the pointwise topology. 

Moreover we show that whenever $\mathscr{M}$ is a monotone and Jensen-concave mean which is continuous in its weights then $\mathscr H$ is monotone with respect to the partitioning of the vector. Finally we deliver some sufficient conditions for $\lambda$ to validate the equality $\mathscr H(\lambda)=\sup \mathscr H$ for every symmetric and monotone mean. 
\end{abstract}
\maketitle


\section{Introduction}
History of Hardy inequality began in 1920s and a series of papers by Hardy \cite{Har20}, Landau \cite{Lan21}, Knopp \cite{Kno28}, and Carleman \cite{Car32}. Their results can be summarized as the 
inequality involving the $p$-th power mean $\P_p$. More precisely they proved that
\Eq{E:C(p)0}{
\sum_{n=1}^\infty \P_p(x_1,\dots,x_n) < C(p) \cdot \sum_{n=1}^\infty x_n 
}
for all $p<1$ and $x \in \ell^1(\R_+)$, where 
\Eq{E:C(p)}{
C(p):=
\begin{cases} 
(1-p)^{-1/p}&p \in (-\infty,0) \cup (0,1), \\ 
e & p=0.
\end{cases} 
}
Moreover it is known that the above constants are sharp. It can be extended by putting $C(-\infty):=1$ and $C(p):=+\infty$ for all $p \in [1,+\infty]$ (with a natural extension of power means $\P_{-\infty}=\min$ and $\P_{+\infty}=\max$). For more details we refer the reader to surveys by Pe\v{c}ari\'c--Stolarsky \cite{PecSto01}, Duncan--McGregor \cite{DunMcg03}, and a book of Kufner--Maligranda--Persson \cite{KufMalPer07}.

This classical result was extended in several directions. First, P\'ales and Persson \cite{PalPer04} introduced a notion of Hardy mean. More precisely $\M\colon \bigcup_{n=1}^\infty I^n \to I$ (here $I$ stands for an interval with $\inf I=0$) is a \emph{Hardy mean} if there exists a constant $C \in (0,+\infty)$ such that
\Eq{*}{
\sum_{n=1}^\infty \M(x_1,\dots,x_n) \le C \cdot \sum_{n=1}^\infty x_n \qquad \text{ for all }x \in \ell^1(I).
}

In the next step, following the notion from \cite{PalPas16}, the smallest extended real number $C$ satisfying this inequality is called a \emph{Hardy constant} of $\M$ and denoted here simply by $H$. In this setup a mean is a Hardy mean if and only if its Hardy constant is finite.

In fact the most important result from \cite{PalPas16} is that whenever $\M$ is a monotone, symmetric, Jensen concave, homogeneous, and repetition invariant mean on $\R_+$ then its Hardy constant is given by a limit
\Eq{*}{
H =\lim_{n \to \infty} n \cdot \M\big(1,\tfrac12,\dots,\tfrac1n\big).
}
In particular this sequence is always convergent (possibly to $+\infty$) and $\M$ is a Hardy mean if and only if this limit is finite.  This result generalized the inequality from 1920s. 

The next step was to deliver a weighted counterpart of Hardy inequality. Such generalization was first study by Copson \cite{Cop27} and Elliott \cite{Ell26} who proved the inequality
\Eq{*}{
\sum_{n=1}^\infty \P_p \big( (x_1,\dots,x_n),(\lambda_1,\dots,\lambda_n)\big) < C(p) \sum_{n=1}^\infty \lambda_nx_n
}
for every $p\in(0,1)$, all-positive-entries sequence $\lambda$, and $x \in \ell^1(\lambda)$ (here $\P_p$ stands for the weighted $p$-th power mean). This result is generalized in a series of papers by P\'ales and Pasteczka \cite{PalPas18b,PalPas18a,PalPas19b,PalPas19a,PalPas20}.

All precise definitions concerning weighted means are given in the next section. Let us now give some insight into these results. 

One of new concepts which appeared in \cite{PalPas19b} was to introduced a weighted Hardy constant. For a weighted mean $\M$ (see the next section for the definition) and infinite sequence of weights $\lambda$ we define $\Hc{}(\lambda)$ as the smallest extended real number such that 
\Eq{*}{
\sum_{n=1}^\infty \lambda_n \M\big((x_1,\dots,x_n),(\lambda_1,\dots,\lambda_n)\big) \le \Hc{}(\lambda) \cdot \sum_{n=1}^\infty \lambda_nx_n \qquad \text{ for all }x \in \ell^1(\lambda).
}

Note that for $\lambda=(1,1,\dots)=:\vone$ we go back to the nonweighted setting, thus we have $\Hc{}(\vone)=H$ (we recall some of these definitions more precisely in section~\ref{sec:DefsHardy}).

Remarkably, it turned out that whenever $\M$ is monotone and symmetric then the maximal weighted Hardy constant is a nonweighted one (which refers to a constant sequence $\lambda$) -- cf. \cite[Theorem 2.8]{PalPas19b} which is quoted in Theorem~\ref{thm:mu1} below. This obviously extends the Copson--Elliott result.

Second important result states that whenever $\M$ is symmetric, monotone, and Jensen-concave weighted mean (either $\R$-weighted which is continuous in its weights or $\Q$-weighted), and $(\lambda_n)_{n=1}^\infty$ is a sequence of weights such that $\sum_{n=1}^\infty \lambda_n=+\infty$ and $(\frac{\lambda_n}{\lambda_1+\dots+\lambda_n})_{n=1}^\infty$ is nonincreasing then
\Eq{E:Hlambdalim}{
\Hc{}(\lambda)=\sup_{y>0} \liminf_{n \to \infty} \tfrac {\lambda_1+\lambda_2+\cdots+\lambda_n}y \cdot \M \Big(\big(\tfrac{y}{\lambda_1},\tfrac{y}{\lambda_1+\lambda_2},\tfrac{y}{\lambda_1+\lambda_2+\lambda_3},\dots\big),(\lambda_1,\lambda_2,\lambda_3,\dots)\Big).
}
The key tool of the proof was so-called (nonweighted) Kedlaya  inequality \cite{Ked94} and its weighted counterpart \cite{Ked99}, which was extended in both of these cases -- cf. \cite{PalPas16} and \cite{PalPas18b}, respectively.

Having this, our consideration split into two parts. First issue was to characterize Jensen-concavity for vary families of means (symmetry and monotonicity are simpler in general) -- such results are contained in \cite{PalPas19a}. Second problem was to calculate a weighted Hardy constant for some particular families (and weights) -- it was done in \cite{PalPas20}.

\medskip

The present paper is a continuation of this research. 
We study the properties of the mapping $\Hc{}$ for a given (fixed) mean. This problem arised from the paper \cite{PalPas19b}, where it was proved that, under some additional assumptions, the maximal value is obtained for the contstant vector. There were also few other results (for particular families of means and under some additional condition on~$\lambda$) which arised from studying the equality \eq{E:Hlambdalim} (see \cite{PalPas20}). All of them can be encompassed in the following form: \emph{Under certain conditions the value $\Hc{}(\lambda)$ depends on $\lambda$ only implicitly by a limit of the ratio sequence $\big(\frac{\lambda_n}{\lambda_1+\dots+\lambda_n}\big)_{n=1}^\infty$}. It~also corresponds to our Theorem~\ref{thm:main}.

\section{Weighted means}

In this section we recall several preliminary results concerning weighted means. This definition first appeared in \cite{PalPas18b} in the context of so-called Kedlaya inequality \cite{Ked94,Ked99}. It is separated from any particular family of means, which was a new idea.

\begin{defin}[\cite{PalPas18b}, Weighted mean]\label{def:WQA}
Let $I \subset \R$ be an arbitrary interval, $R \subset \R$ be a ring and, for 
$n\in\N$, define the set of $n$-dimensional weight vectors $W_n(R)$ by
\Eq{*}{
  W_n(R):=\{(\lambda_1,\dots,\lambda_n)\in R^n\mid\lambda_1,\dots,\lambda_n\geq0,\,\lambda_1+\dots+\lambda_n>0\}.
}
\emph{A weighted mean on $I$ over $R$} or, in other words, \emph{an $R$-weighted mean on $I$} is a function 
\Eq{*}{
\M \colon \bigcup_{n=1}^{\infty} I^n \times W_n(R) \to I
}
satisfying the following conditions:

\begin{enumerate}[(i)]
 \item \emph{Nullhomogeneity in the weights}: For all $n \in \N$, for all $(x,\lambda) \in I^n \times W_n(R)$, 
and $t \in R_+$,
 \Eq{*}{
   \M(x,\lambda)=\M(x,t \cdot \lambda),
 }
\item \emph{Reduction principle}: For all $n \in \N$ and for all $x \in I^n$, $\lambda,\mu \in W_n(R)$, 
\Eq{*}{
\M(x,\lambda+\mu)=\M(x\odot x,\lambda\odot\mu),
}
where $\odot$ is a \emph{shuffle operator}, that is $
(p_1,\dots,p_n)\odot (q_1,\dots,q_n):=(p_1,q_1,\dots,p_n,q_n)$.

\item \emph{Mean value property}: For all $n \in \N$, for all $(x,\lambda) \in I^n \times W_n(R)$
\Eq{*}{
\min(x_1,\dots,x_n) \le \M(x,\lambda)\le \max(x_1,\dots,x_n),
}
\item \emph{Elimination principle}: 
entries with a zero weight can be omitted. 
\end{enumerate}
\end{defin}

From now on $I$ is an arbitrary interval, $R$ stands for an arbitrary subring of $\R$.

Following \cite{PalPas18b}, a weighted mean $\M$ is said to be \emph{symmetric}, if for all $n \in \N$, $x \in I^n$, $\lambda \in W_n(R)$, and a permutation $\sigma \in S_n$ we have
$\M(x,\lambda) =\M(x\circ\sigma,\lambda\circ\sigma)$. 
$\M$ is called \emph{monotone} if it is nondecreasing in each of its entry. Similarly $\M$ is \emph{concave} if for every $n \in \N$ and $\lambda \in W_n(R)$ the mapping $I^n \ni x \mapsto \M(x,\lambda) \in I$ is concave (or equivalently, by~\cite{BerDoe15}, Jensen concave).

In fact in can be proved that every $R$-weighted mean admits a unique extension to $R^*$-weighted mean ($R^*$ stands for the quotient field, i.e. the smallest field generated by $R$). Moreover this extension preserve few important properties. This statement binds few results \cite[Theorems 2.2--2.5]{PalPas18b}.

\begin{prop}
\label{prop:fieldex}
Let $I$ be an interval, $R \subset \R$ be a ring, $\M$ be a weighted mean defined on $I$ over $R$.
Then there exists a unique mean $\widetilde\M$ defined on $I$ over $R^*$ (which denotes the quotient field of $R$) such that 
\Eq{*}{
\widetilde \M \vert_{\bigcup_{n=1}^{\infty} I^n \times W_n(R)} =\M.
}
Moreover if $\M$ is symmetric, monotone or Jensen concave then so is $\widetilde\M$, respectively.
\end{prop}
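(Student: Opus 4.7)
The plan is to define $\widetilde\M$ by the only formula forced by nullhomogeneity and then to verify the four axioms together with preservation of the structural properties.

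\textbf{Construction.} Given $(x,\lambda)\in I^n\times W_n(R^*)$, write each entry $\lambda_i=p_i/q_i$ with $p_i,q_i\in R$, $q_i\ne 0$, and let $q:=\prod_{i=1}^n q_i^2\in R_+$ (the squares ensure $q>0$ regardless of the signs of the denominators). Then $q\lambda\in R^n$ and $q\lambda\in W_n(R)$, so I set
\Eq{*}{
\widetilde\M(x,\lambda):=\M(x,q\lambda).
}
\textbf{Well-definedness.} For two admissible scalars $q,q'\in R_+$ with $q\lambda,q'\lambda\in W_n(R)$, nullhomogeneity of $\M$ (applied with $t=q'$ and with $t=q$) gives
$\M(x,q\lambda)=\M(x,qq'\lambda)=\M(x,q'\lambda)$, so the value is independent of $q$. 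In particular, if $\lambda\in W_n(R)$, one may choose $q=1$, so $\widetilde\M$ extends $\M$. Uniqueness of the extension is then immediate: any $R^*$-weighted mean restricting to $\M$ must satisfy $\widetilde\M(x,\lambda)=\widetilde\M(x,q\lambda)=\M(x,q\lambda)$ by its own nullhomogeneity.

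\textbf{Verification of the axioms.} For nullhomogeneity, given $t=a/b\in R^*_+$ with $a,b\in R_+$ and $q\in R_+$ such that $q\lambda\in R^n$, both $qb\lambda$ and $qa\lambda=qb(t\lambda)$ belong to $R^n$, and nullhomogeneity of $\M$ applied to each yields $\widetilde\M(x,\lambda)=\M(x,qab\lambda)=\widetilde\M(x,t\lambda)$. For the reduction principle, pick $q\in R_+$ with $q\lambda,q\mu\in R^n$; applying the reduction principle for $\M$ to $(q\lambda,q\mu)$ and using the already proved nullhomogeneity of $\widetilde\M$ gives
\Eq{*}{
\widetilde\M(x,\lambda+\mu)=\M(x,q\lambda+q\mu)=\M(x\odot x,q\lambda\odot q\mu)=\widetilde\M(x\odot x,\lambda\odot\mu).
}
The mean value property and the elimination principle for $\widetilde\M$ are inherited from $\M$ directly through the defining formula.

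\textbf{Preservation of properties.} For symmetry: $q(\lambda\circ\sigma)=(q\lambda)\circ\sigma\in R^n$, so
$\widetilde\M(x\circ\sigma,\lambda\circ\sigma)=\M(x\circ\sigma,(q\lambda)\circ\sigma)=\M(x,q\lambda)=\widetilde\M(x,\lambda)$. Monotonicity and Jensen concavity in $x$ are preserved because, for fixed $\lambda\in W_n(R^*)$, the map $x\mapsto\widetilde\M(x,\lambda)$ coincides with $x\mapsto\M(x,q\lambda)$ on $I^n$.

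\textbf{Main obstacle.} There is no deep obstruction; the whole argument rests on the single observation that nullhomogeneity forces a unique formula once a common denominator has been cleared. The only bookkeeping care needed is to pick $q\in R_+$ (hence the squares of denominators) and to verify each axiom by reducing to the corresponding axiom for $\M$ on rescaled weights.
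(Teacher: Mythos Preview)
The paper does not actually prove this proposition; it merely records it as a combination of \cite[Theorems~2.2--2.5]{PalPas18b}. Your argument is a correct, self-contained proof that follows the only natural route---clearing denominators and invoking nullhomogeneity---which is exactly the mechanism used in the cited source. One cosmetic remark: when verifying that $\widetilde\M$ extends $\M$ you choose $q=1$, which presupposes $1\in R$; the paper only adopts this convention \emph{after} the proposition, but the point is harmless since for $\lambda\in W_n(R)$ any $q\in R_+$ gives $\M(x,q\lambda)=\M(x,\lambda)$ by nullhomogeneity of $\M$ itself.
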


This proposition is of essential importance as it allows to extend nonweighted means to weighted ones.
Indeed, there exists a natural correspondence between repetition invariant means and $\Z$-weighted means (see \cite[Theorem~2.3]{PalPas18b} for details). Then by Proposition~\ref{prop:fieldex} it can be uniquely extended to $\Q$-weighted mean and, whenever there exists a continuous extension, to $\R$-weighted mean. What is more, for a given mean such extension is uniquely determined and in most cases it coincide with  already known generalizations -- for example for quasideviation means \cite{Pal88a} and all its subclasses: quasiarithmetic means \cite{HarLitPol34}, Gini means \cite{Gin38}, Bajraktarevi\'c means \cite{Baj58,Baj63}, deviation (Dar\'oczy) \cite{Dar72b} means and so on.

Based on these facts and nullhomogeneity in the weights hereafter we assume $1 \in R$. It can be easily checked that the arithmetic mean (from now on denoted by $\A$) is an $\R$-weighted mean on $\R$.

\subsection{$R$-simple functions. Sum-type and integral-type notation}
For the sake of convenience, we will use the sum-type and integral-type abbreviation. First, if $\M$ is an $R$-weighted mean on $I$, $n\in\N$ and $(x,\lambda) \in I^n\times W_n(R)$, then we denote
\Eq{*}{
\Mm_{i=1}^n(x_i,\lambda_i):=\M\big((x_1,\dots,x_n),(\lambda_1,\dots,\lambda_n)\big).
}

To introduce the integral-type notion we need to define so-called $R$-intervals. We say that $D\subseteq\R$ is an \emph{$R$-interval} if $D$ is of the form $[a,b)$ for some $a,b\in R$. For a given $R$-interval $D=[a,b)$, a function $f\colon D\to I$ is called \emph{$R$-simple} if there exist a partition of $D$ into a finite number of $R$-intervals $\{D_i\}_{i=1}^n$ such that:
\begin{enumerate}[(i)]
 \item $\sup D_i=\inf D_{i+1}$ for all $i\in\{1,\dots,n-1\}$ 
 \item $f$ is constant on each $D_i$.
 \end{enumerate} 
 Then, for an $R$-weighted mean $\M$ on $I$ and $R$-simple function $f$ like above, we define
\Eq{E:int_notion}{
\Mm_a^b f(x)dx:=\Mm_{i=1}^n \big(f|_{D_i},|D_i|\big)=\M\big((f|_{D_1},\dots,f|_{D_n}),(|D_1|,\dots,|D_n|)\big).
}
Let us just mention that we use reduction principle to define this function -- that is to guarantee that the value of a mean does not depend on a choice of $(D_i)$. 

In this setting $\M$ is symmetric if and only if for every pair of $R$-simple functions $f,g \colon D \to I$ which have the same
distribution the equality $\M f(x)dx=\M g(x)dx$ holds. Similarly $\M$ is monotone if and only if for every pair of $R$-simple functions $f,g \colon D \to I$ with $f\le g$ the inequality $\M f(x)dx \le \M g(x)dx$ is valid.

Let us introduce the notion of a weighted characteristic function. For $n \in \N_+ \cup\{\infty\}$, $x \in I^n$ and $\lambda \in [0,\infty)^n$ set $\Lambda_k:=\sum_{i=1}^k \lambda_i$ ($0\le k\le n$) (for $n\in\{0,+\infty\}$ we take a natural extension) and define $\chi_{x,\lambda} \colon [0, \Lambda_n)  \to I$ by 
\Eq{*}{
\chi_{x,\lambda}(t)=x_k \qquad \text{ for }t \in [\Lambda_{k-1},\Lambda_k) \qquad k \in \N \cap [1,n].
}
Observe that, in view of \eq{E:int_notion}, for every mean $R$-weighted mean $\M$ on $I$, $n \in \N$, and a pair $(x,\lambda)\in I^n \times W_n(R)$ we have following identities
\Eq{*}{
\M(x,\lambda)=\Mm_{i=1}^n(x_i,\lambda_i)
=\Mm_{i=1}^n\big(\chi_{x,\lambda}(\Lambda_{i-1}),\lambda_i\big)
=\Mm_{0}^{\Lambda_n} \chi_{x,\lambda}(t) dt.
}

\subsection{\label{sec:DefsHardy}Hardy inequality}

For the simplicity we will assume that weight zero is not allowed. Therefore let $W^0_N(R)=(R \cap (0,+\infty))^N$ and $W^0(R)=(R \cap (0,+\infty))^\infty$. Let us first recall the definition of weighted Hardy property which was already mentioned in the introduction.

\begin{defin}[\cite{PalPas19b}, Weighted Hardy property]
Let $I$ be an interval with $\inf I=0$, $R \subset \R$ be a ring. For an $R$-weighted mean $\M$ on $I$ and weights $\lambda \in W^0(R)$, let $C$ be the smallest extended real number such that for all sequences $(x_n)$ in $I$,
\Eq{E:WHI}{
\sum_{n=1}^{\infty} \lambda_n \cdot \Mm_{i=1}^n\big(x_i,\lambda_i\big) \le C \cdot \sum_{n=1}^{\infty} \lambda_nx_n.
}
We call $C$ the \emph{$\lambda$-weighted Hardy constant of $\M$} or the \emph{$\lambda$-Hardy constant of $\M$} and denote it by $\Hc\M(\lambda)$. Whenever this constant is finite, then $\M$ is called a \emph{$\lambda$-weighted Hardy mean} or simply a \emph{$\lambda$-Hardy mean}.

\end{defin}

In the other setup for $\lambda \in W^0(R)$ we consider a weighted $\ell^1$ space 
\Eq{*}{
\ell^1(\lambda,I):=\Big\{(x_1,x_2,\dots)\in I^\N\colon \norm{x}_{\ell_1(\lambda)}:=\sum_{n=1}^\infty \lambda_i\abs{x_i}<\infty\Big\}
}
words for a given $R$-weighted mean $\M$ on $I$ we define the weighted averaging operator $T_\M \colon \ell^1(\lambda,I) \to I^\N$ by
\Eq{*}{
T_{\M,\lambda}(x_1,x_2\dots)=\Big(\Mm_{i=1}^n\big(x_i,\lambda_i\big)\Big)_{n=1}^\infty.
}
Using this notation we have
\Eq{*}{
\Hc\M(\lambda)=\norm{T_{\M,\lambda}}_{\ell_1(\lambda)\to\ell_1(\lambda)}\qquad \text{ for all }\lambda \in W^0(R),} 
as, by the definition, $\Hc\M(\lambda)$ is the smallest extended real number $C$ such that
\Eq{*}{
\norm{T_{\M,\lambda}(x)}_{\ell_1(\lambda)} \le C \norm{x}_{\ell_1(\lambda)}\text{ for all }x \in \ell_1(\lambda).
}
Next result shows that under mild assumptions the maximal Hardy constant is the non-weighted one -- more precisely the one which is related to the vector $\vone:=(1,1,\dots)$.
\begin{thm}[\cite{PalPas19b}, Theorem 2.8]
\label{thm:mu1}
For every symmetric and monotone weighted mean $\M$ we have $\sup\Hc\M=\Hc\M(\vone)$.
\end{thm}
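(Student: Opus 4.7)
The plan is to derive the weighted Hardy inequality for an arbitrary weight sequence $\lambda$ from the unweighted one (the case $\lambda=\vone$) via an unfolding-and-rearrangement argument. By Proposition~\ref{prop:fieldex} I may extend $\M$ to the quotient field $R^*$; the extension remains symmetric and monotone, so I can assume $\Q\subseteq R$. By nullhomogeneity (clearing denominators on finite truncations $\lambda^{(N)}=(\lambda_1,\dots,\lambda_N)$ and then passing $N\to\infty$) it suffices to prove $\Hc\M(\lambda)\le\Hc\M(\vone)$ when every $\lambda_n$ is a positive integer. For such $\lambda$ and $x\in\ell^1(\lambda)$, define the unfolded sequence $y=(y_k)_{k\ge1}$ by $y_k:=x_n$ for $k\in(\Lambda_{n-1},\Lambda_n]$. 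Iterating the reduction principle to split each $\lambda_n$ into $\lambda_n$ unit summands and then applying the elimination principle yields
\[
\M\bigl((x_1,\dots,x_n),(\lambda_1,\dots,\lambda_n)\bigr)=\M\bigl((y_1,\dots,y_{\Lambda_n}),\vone\bigr),
\]
so the weighted Hardy LHS becomes $\sum_n\lambda_n\,\M(y_1,\dots,y_{\Lambda_n})$ while $\sum y_k=\sum\lambda_n x_n$.

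The core of the proof is then to exhibit a rearrangement $z$ of the multiset of entries of $y$ — admissible thanks to symmetry of $\M$, and automatically preserving $\sum z_k=\sum\lambda_n x_n$ — such that
\[
\sum_k \M(z_1,\dots,z_k)\;\ge\;\sum_n \lambda_n\,\M(y_1,\dots,y_{\Lambda_n}).
\]
Given such $z$, the definition of $\Hc\M(\vone)$ applied to $z$ immediately gives
\[
\sum_n \lambda_n\,\M\bigl((x_1,\dots,x_n),\lambda\bigr)\;\le\;\sum_k\M(z_1,\dots,z_k)\;\le\;\Hc\M(\vone)\sum_n\lambda_n x_n,
\]
which is $\Hc\M(\lambda)\le\Hc\M(\vone)$. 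Combined with the trivial $\Hc\M(\vone)\le\sup\Hc\M$, the theorem follows.

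The hard part is constructing $z$ and verifying the comparison. The naive choice $z=y$ does not suffice: for $k\in(\Lambda_{n-1},\Lambda_n]$ one has $\M(y_1,\dots,y_k)=\M((x_1,\dots,x_n),(\lambda_1,\dots,\lambda_{n-1},k-\Lambda_{n-1}))$, which can be strictly less than $\M_n$ whenever $x_n$ exceeds some of the earlier $x_i$'s (this already fails for $\M=\P_0$ with $\lambda=(1,2)$ and $x=(\varepsilon,1)$). My proposed remedy is to sort the blocks of $y$ in decreasing order of $x_n$, so that in the resulting $z$ the last entry of every prefix equals the minimum of that prefix. The key auxiliary claim is then that for a symmetric and monotone weighted mean the function
\[
j\longmapsto\M\bigl((x_1,\dots,x_n),(\lambda_1,\dots,\lambda_{n-1},j)\bigr)
\]
is non-increasing on $[0,\infty)$ whenever $x_n=\min_i x_i$; informally, dumping more weight on the smallest entry pulls the mean down. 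I would try to establish it from the axioms by rewriting, via the reduction principle, the mean at weight $j+\delta$ as $\M((x_1,\dots,x_n,x_n),(\lambda_1,\dots,\lambda_{n-1},j,\delta))$ and then replacing the extra copy of $x_n$ by any $\ge x_n$ entry using monotonicity of $\M$. Making this last step cleanly produce the required weight-monotonicity — without invoking Jensen concavity or continuity in the weights — is the main technical obstacle of the proof.
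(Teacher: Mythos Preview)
Your route—unfold to unit weights, sort into a nonincreasing $z$, bound by the unweighted Hardy sum of $z$—is exactly the scheme from \cite{PalPas19b} that the paper recalls as Lemmas~\ref{lem:decr}--\ref{lem:rearrangement}; your ``key auxiliary claim'' is Lemma~\ref{lem:decr} combined with symmetry. Two steps in the execution need repair.

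First, your sketch for weight-monotonicity does not close: after reduction you have $\M\big((x_1,\dots,x_n,x_n),(\lambda_1,\dots,\lambda_{n-1},j,\delta)\big)$, and raising the trailing $x_n$ gives an upper bound by some \emph{other} mean, not by $\M\big((x_1,\dots,x_n),(\lambda_1,\dots,\lambda_{n-1},j)\big)$. The missing ingredient is nullhomogeneity. Rescale the weights in the target mean by $(S+\delta)/S$ with $S:=\Lambda_{n-1}+j$, so both means carry total weight $S+\delta$; a further reduction then splits the extra $\delta$ sitting on $x_n$ in the $(j+\delta)$-mean into pieces $\tfrac{\delta}{S}\lambda_1,\dots,\tfrac{\delta}{S}\lambda_{n-1},\tfrac{\delta}{S}j$, each on a copy of $x_n$, while in the rescaled $j$-mean the same weight pieces sit on $x_1,\dots,x_n\ge x_n$. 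Monotonicity (with symmetry to align positions) now gives the inequality. Equivalently: sort first so that $x$ is nonincreasing and quote Lemma~\ref{lem:decr}.

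Second, even granting weight-monotonicity, you have not tied $\sum_k\M(z_1,\dots,z_k)$ back to $\sum_n\lambda_n\M(y_1,\dots,y_{\Lambda_n})$. Applying weight-monotonicity to the nonincreasing $z$ at the \emph{original} breakpoints yields $\sum_k\M(z_1,\dots,z_k)\ge\sum_n\lambda_n\M(z_1,\dots,z_{\Lambda_n})$; you still need $\M(z_1,\dots,z_{\Lambda_n})\ge\M(y_1,\dots,y_{\Lambda_n})$ for each $n$. This holds—$(z_1,\dots,z_{\Lambda_n})$ is the multiset of the $\Lambda_n$ largest entries of $y$, which after sorting dominates $(y_1,\dots,y_{\Lambda_n})$ componentwise, so symmetry plus monotonicity finish—but it must be stated; it is precisely what Lemma~\ref{lem:rearrangement} provides in your unfolded setting.
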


Let us now give some insight into \cite[section 5]{PalPas19b} which was completely devoted to the proof of this theorem. It was split into three, somewhat independent, statements which we recall below. It is quite easy to bind them to the final form.

\begin{lem}[\cite{PalPas19b}, Lemma 5.1]
\label{lem:decr}
Let $\M$ be a $R^*$-weighted, monotone mean on $I$ and $a \in R^* \cap (0,\infty)$. Then the mapping $R^*\cap(0,a] \ni u \mapsto \Mm_0^uf(t)\:dt \in I$ is nonincreasing for every nonincreasing $R^*$-simple function $f\colon [0,a) \to I$.
\end{lem}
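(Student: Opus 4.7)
The plan is to rescale the function $f$ horizontally so that its restriction to $[0,u_1)$ becomes a function on $[0,u_2)$ which dominates $f$ pointwise; after that, monotonicity of $\M$ plus nullhomogeneity in the weights finish the argument in two lines. Fix $u_1,u_2 \in R^* \cap (0,a]$ with $u_1<u_2$ (the case $u_1 = u_2$ is trivial). Set $r:=u_1/u_2\in R^* \cap (0,1)$ and define $g\colon [0,u_2)\to I$ by $g(t):=f(rt)$.

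First I verify that $g$ is again $R^*$-simple and dominates $f$. Let $\{D_i\}_{i=1}^n$ be a partition of $[0,u_1)$ into $R^*$-intervals on which $f$ is constant with value $c_i$ (one may refine the partition of $f$ beforehand so that $u_1$ is itself a breakpoint). Then $\{r^{-1}D_i\}_{i=1}^n$ is a partition of $[0,u_2)$ into $R^*$-intervals on which $g$ equals $c_i$. Note that the $R^*$-interval status of $r^{-1}D_i$ is the only place where passing to the quotient field $R^*$ matters. Since $r\le 1$, we have $rt\le t$ for $t\in[0,u_2)$, and the nonincreasing monotonicity of $f$ then gives $g(t)=f(rt)\ge f(t)$.

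With $g$ in hand, monotonicity of $\M$ (in its equivalent formulation for $R^*$-simple functions recalled in the excerpt) yields $\Mm_0^{u_2} f(t)\,dt \le \Mm_0^{u_2} g(t)\,dt$, while nullhomogeneity of $\M$ in its weights applied with scaling factor $r$ gives
\Eq{*}{
\Mm_0^{u_2}g(t)\,dt
=\M\big((c_1,\dots,c_n),(r^{-1}|D_1|,\dots,r^{-1}|D_n|)\big)
=\M\big((c_1,\dots,c_n),(|D_1|,\dots,|D_n|)\big)
=\Mm_0^{u_1}f(t)\,dt.
}
Chaining the two relations produces $\Mm_0^{u_2} f \le \Mm_0^{u_1} f$, which is exactly the asserted monotonicity.

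The main conceptual step is spotting the rescaling $g(t):=f(rt)$; once it is in place, neither Jensen concavity nor symmetry is needed, and the reduction principle enters only implicitly through the fact (already recorded in the definition of the integral-type notation) that $\Mm_a^b f(t)\,dt$ does not depend on the chosen partition of $f$. This is precisely why the lemma stands as the first of the three preparatory statements leading to Theorem~\ref{thm:mu1}.
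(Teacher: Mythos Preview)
Your argument is correct. The rescaling $g(t):=f(rt)$ with $r=u_1/u_2$ is exactly the right device: it turns the comparison between two different domains into a pointwise comparison on a single domain, after which monotonicity and nullhomogeneity do all the work. The only points that deserve a careful eye are (i) that $r\in R^*$ because $R^*$ is a field, so the dilated partition $\{r^{-1}D_i\}$ really does consist of $R^*$-intervals, and (ii) that to apply the monotonicity criterion for simple functions one passes to a common $R^*$-refinement of the partitions of $f|_{[0,u_2)}$ and $g$; both are handled implicitly in your write-up and are unproblematic.

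Note that in the present paper this lemma is quoted from \cite{PalPas19b} without proof, so there is no in-paper argument to compare against. The proof in \cite{PalPas19b} proceeds along the same rescaling idea, so your approach matches the original.
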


\begin{lem}[\cite{PalPas19b}, Lemma~5.2]\label{lem:L5.2}
Let $\M$ be a monotone $R^*$-weighted mean on $I$. Then, for all $N \in \N$, for all nonincreasing sequences $x \in I^N$ and weights $\lambda\in W^0_N(R^*)$, the inequality 
\Eq{*}{
\sum_{n=1}^N \lambda_n \Mm_{i=1}^n \big(x_i,\lambda_i\big) \le \Hc\M(\vone) \sum_{n=1}^N \lambda_nx_n.
}
is valid.
\end{lem}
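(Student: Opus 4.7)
The plan is to translate the weighted sum into a non-weighted one via the $R^*$-simple-function calculus, then apply Lemma~\ref{lem:decr} together with the very definition of $\Hc\M(\vone)$.

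First I would use nullhomogeneity to reduce to the case of integer weights. Because the target inequality is invariant under $\lambda\mapsto t\lambda$ for $t\in R^*\cap(0,\infty)$, whenever the ratios $\lambda_i/\lambda_j$ are rational (in particular always when $R^*\subseteq\Q$, the situation delivered by Proposition~\ref{prop:fieldex}) one can clear denominators to arrange $\lambda\in\N^N$. For non-commensurable real weights an extra continuity-in-weights or rational-approximation argument would be needed, and this is the only place I expect real work.

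Assuming $\lambda\in\N^N$, I would set $\chi:=\chi_{x,\lambda}\colon[0,\Lambda_N)\to I$ and $y_k:=\chi(k-1)$ for $k=1,\dots,\Lambda_N$. The function $\chi$ is a nonincreasing $R^*$-simple function (since $x$ is nonincreasing and all $\lambda_n>0$), and refining its partition to unit intervals -- legal via the reduction principle because all $\Lambda_n\in\N$ -- gives
\Eq{*}{
\Mm_{i=1}^n(x_i,\lambda_i)=\Mm_0^{\Lambda_n}\chi(t)\,dt=\Mm_{k=1}^{\Lambda_n}(y_k,1).
}
Next, Lemma~\ref{lem:decr} applied to $\chi$ tells us that $k\mapsto\Mm_{i=1}^k(y_i,1)$ is nonincreasing; hence for each $k\in\{\Lambda_{n-1}+1,\dots,\Lambda_n\}$ we have $\Mm_{i=1}^{\Lambda_n}(y_i,1)\le\Mm_{i=1}^k(y_i,1)$. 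Summing in $k$ picks up the factor $\lambda_n$, and summing then in $n$ telescopes to
\Eq{*}{
\sum_{n=1}^N \lambda_n\,\Mm_{i=1}^n(x_i,\lambda_i)\le\sum_{k=1}^{\Lambda_N}\Mm_{i=1}^k(y_i,1).
}

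Finally I would invoke the defining non-weighted Hardy inequality for $\Hc\M(\vone)$. It is stated for infinite $\ell^1$-sequences, but the bound extends to the finite tuple $(y_1,\dots,y_{\Lambda_N})$ by appending zeros (when $0\in I$) or a geometric tail $\epsilon\cdot 2^{-k}$ with $\epsilon\to 0^+$ (using $\inf I=0$). This produces
\Eq{*}{
\sum_{k=1}^{\Lambda_N}\Mm_{i=1}^k(y_i,1)\le\Hc\M(\vone)\sum_{k=1}^{\Lambda_N}y_k=\Hc\M(\vone)\sum_{n=1}^N\lambda_n x_n,
}
the last equality because $\chi$ takes the value $x_n$ on exactly $\lambda_n$ unit subintervals of $[\Lambda_{n-1},\Lambda_n)$. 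Chaining the two previous display inequalities then finishes the proof. Beyond the opening reduction, the remaining steps amount to a routine assembly of the monotonicity from Lemma~\ref{lem:decr} with the non-weighted Hardy bound.
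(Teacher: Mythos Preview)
Your argument is clean and correct once $\lambda\in\N^N$, and the chain
\Eq{*}{
\sum_{n=1}^N \lambda_n\,\Mm_{i=1}^n(x_i,\lambda_i)
\;\le\;\sum_{k=1}^{\Lambda_N}\Mm_{i=1}^k(y_i,1)
\;\le\;\Hc\M(\vone)\sum_{k=1}^{\Lambda_N}y_k
\;=\;\Hc\M(\vone)\sum_{n=1}^N\lambda_n x_n
}
is exactly right there. The gap is precisely where you flag it: the lemma is stated for an \emph{arbitrary} quotient field $R^*$, and when $R^*=\R$ the weights $\lambda_i$ need not be commensurable, so the denominator-clearing step is unavailable. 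You propose continuity-in-weights or rational approximation as a patch, but neither is at hand --- continuity in the weights is \emph{not} among the hypotheses of Lemma~\ref{lem:L5.2}, and without it a rational-approximation argument cannot be closed. So this is a genuine missing idea, not routine bookkeeping.

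The paper's route (visible in Section~\ref{sec:1gamma}, which it says ``reuse[s] the concepts'' of the original proof) avoids the integer reduction altogether. After normalizing the total weight to~$1$, one introduces for each $j\in\N$ an $R^*$-simple majorant $f_j\ge\chi_{x,\lambda}$ on $[0,1)$ whose breakpoints are $\tfrac{s}{j}$ (for $\lambda=\vone$ one has $\Lambda_s/\Lambda_j=s/j\in\Q\subseteq R^*$ regardless of the original weights), bounds the weighted sum by the Riemann integral of the nonincreasing function $C_j(t)=\inf_{s\le t}\Mm_0^s f_j$, and then estimates $\int_0^1 C_j$ by a finite non-weighted Hardy sum via Lemma~\ref{lem:finite}, plus error terms of order $1/j$. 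Letting $j\to\infty$ kills the errors. The crucial point is that the auxiliary partition parameter $j$ is always an integer, so the subdivision points lie in $R^*$ for every ring $R$; this limiting procedure replaces your denominator-clearing and works uniformly, with no continuity hypothesis needed.
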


\begin{lem}[\cite{PalPas19b}, Lemma 5.3]
\label{lem:rearrangement}
Let $\M$ be a symmetric and monotone $R$-weighted mean on $I$. Then, for all $N\in\N$, for all vectors $x\in I^N$ and weights $\lambda \in W^0_N(R)$, there exist $M \in \N$, a nonincreasing sequence $y\in I^M$ and a weight sequence $\psi\in W^0_M(R)$ such that
\Eq{*}{
\sum_{n=1}^N \lambda_n \Mm_{i=1}^n \big(x_i,\lambda_i\big)&\le \sum_{m=1}^M \psi_m \Mm_{i=1}^m \big(y_i,\psi_i\big).
}
and
\Eq{*}{
\sum_{\{n\colon x_n=t\}} \lambda_n =\sum_{\{m\colon y_m=t\}} \psi_m
}
for all $t\in\R$. In particular
$\sum_{n=1}^N \lambda_nx_n=\sum_{m=1}^M \psi_my_m$.
\end{lem}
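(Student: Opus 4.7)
The plan is to rearrange $(x,\lambda)$ via the characteristic-function representation on $[0,\Lambda_N)$ and to choose the breakpoints of $(y,\psi)$ cleverly so that the claim reduces to a pointwise comparison of two cumulative-mean functions plus an application of Lemma~\ref{lem:decr}. Set $\chi:=\chi_{x,\lambda}\colon[0,\Lambda_N)\to I$ and let $\chi^*$ denote its nonincreasing rearrangement. Writing $v_1>\cdots>v_K$ for the distinct values of $x$, the weights $w_i:=\sum_{n\colon x_n=v_i}\lambda_n$ lie in $R$, so $\chi^*$ equals $v_i$ on an interval of length $w_i$ and is an $R$-simple function. Let $0=:\Xi_0<\Xi_1<\dots<\Xi_M=\Lambda_N$ be the union of the natural breakpoints of $\chi^*$ with $\{\Lambda_1,\ldots,\Lambda_N\}$, and define $\psi_m:=\Xi_m-\Xi_{m-1}\in R$ and $y_m:=$ the value of $\chi^*$ on $[\Xi_{m-1},\Xi_m)$. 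Then $y$ is nonincreasing, $\psi\in W^0_M(R)$, and since $\chi_{y,\psi}=\chi^*$ has the same distribution as $\chi$, the distribution identity $\sum_{n\colon x_n=t}\lambda_n=\sum_{m\colon y_m=t}\psi_m$ holds for every $t\in\R$.

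Introduce the cumulative means $F(u):=\Mm_0^u\chi(t)dt$ and $F^*(u):=\Mm_0^u\chi^*(t)dt$. By Lemma~\ref{lem:decr} applied to the nonincreasing function $\chi^*$, the map $u\mapsto F^*(u)$ is nonincreasing. The key auxiliary estimate is the pointwise inequality $F(u)\le F^*(u)$ at every relevant breakpoint. To prove it, consider the nonincreasing rearrangement $(\chi|_{[0,u)})^*$ of the restriction $\chi|_{[0,u)}$; then $(\chi|_{[0,u)})^*(s)\le\chi^*(s)$ for all $s\in[0,u)$, because for every $v\in I$ the set $\{t\in[0,u)\colon\chi(t)>v\}$ is contained in $\{t\in[0,\Lambda_N)\colon\chi(t)>v\}$. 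Symmetry of $\M$ (in its $R$-simple formulation via equal distributions) gives $F(u)=\Mm_0^u(\chi|_{[0,u)})^*(t)dt$, and monotonicity applied to the pointwise inequality then yields $F(u)\le F^*(u)$.

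To conclude, group the refined breakpoints according to the original ones: for each $n\in\{1,\ldots,N\}$, the indices $m$ with $\Lambda_{n-1}<\Xi_m\le\Lambda_n$ satisfy $\sum\psi_m=\lambda_n$, and each such $\Xi_m\le\Lambda_n$ gives $F^*(\Xi_m)\ge F^*(\Lambda_n)\ge F(\Lambda_n)$ by monotonicity of $F^*$ and the auxiliary estimate. Summing $\psi_m F^*(\Xi_m)\ge\psi_m F(\Lambda_n)$ over each block produces a contribution of at least $\lambda_n F(\Lambda_n)$, and summation over $n$ yields the claimed inequality $\sum_m\psi_m F^*(\Xi_m)\ge\sum_n\lambda_n F(\Lambda_n)$. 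The main obstacle is the pointwise rearrangement estimate $F(u)\le F^*(u)$: it requires care in identifying the restriction and the global rearrangement and in invoking the $R$-simple versions of symmetry and monotonicity; once it is in hand, the remaining steps assemble by straightforward bookkeeping around the refined breakpoints.
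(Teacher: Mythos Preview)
The paper does not supply a proof of this lemma; it is merely quoted from \cite[Lemma~5.3]{PalPas19b}. So there is nothing in the present paper to compare your argument against directly.

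That said, your proof is correct and self-contained. The construction of $(y,\psi)$ via the nonincreasing rearrangement $\chi^*$ of $\chi_{x,\lambda}$, with breakpoints refined to include both the natural jumps of $\chi^*$ and the original $\Lambda_n$'s, gives exactly what is needed: the equidistribution identity is immediate from $\chi_{y,\psi}=\chi^*$, and all breakpoints remain in $R$ because each $w_i$ is a finite sum of the $\lambda_n$'s. The core inequality $F(\Lambda_n)\le F^*(\Lambda_n)$ is cleanly obtained by combining the symmetric reformulation $\Mm_0^{\Lambda_n}\chi=\Mm_0^{\Lambda_n}(\chi|_{[0,\Lambda_n)})^*$ with the pointwise bound $(\chi|_{[0,\Lambda_n)})^*\le\chi^*|_{[0,\Lambda_n)}$ (a standard rearrangement fact which you justify correctly via level sets) and monotonicity of $\M$. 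Lemma~\ref{lem:decr} then gives $F^*(\Xi_m)\ge F^*(\Lambda_n)$ on each block, and the bookkeeping assembles to the claimed sum inequality.

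For context, the original argument in \cite{PalPas19b} proceeds differently: it performs a finite sequence of adjacent transpositions (a bubble-sort on the pair $(x,\lambda)$), showing at each swap that the partial-mean sum does not decrease while the weighted distribution is preserved. Your global rearrangement approach is more conceptual and avoids the induction on transpositions; the original swap argument, on the other hand, makes the role of symmetry and monotonicity visible at the level of a single exchange and does not rely on Lemma~\ref{lem:decr}. Both routes are valid and of comparable length.
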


Next theorem shows that whenever the mean $\M$ admit some additional assumptions, we can prove a counterpart of this lemma with $\psi=\lambda$. However, the sequence $x$ and $y$ are no longer equidistributed so it cannot be considered as a generalization. We also need much more assumptions for the mean $\M$.

\begin{thm}
\label{thm:samesum}
 Let $\M$ be a monotone and Jensen concave
$\Q$-weighted mean on $I$ (resp. $\R$-weighted mean on $I$ which is continuous in its weights).

For every $\lambda \in W_N^0(\Q)$ (resp. $\lambda \in W_N^0(\R)$) and $x \in I^N$ there exists a nonincreasing sequence $y \in I^N$ such that $\sum_{n=1}^N \lambda_nx_n=\sum_{n=1}^N \lambda_ny_n$ and 
 \Eq{JCin}{
\Mm_{i=1}^n (x_i,\lambda_i) \le \Mm_{i=1}^n (y_i,\lambda_i) \text{ for all }n \in\{1,\dots,N\}.
}
 \end{thm}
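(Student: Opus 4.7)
The plan is to construct $y$ from $x$ via the concave majorant of the cumulative weighted partial-sum function. Let $F\colon[0,\Lambda_N]\to\R$ be the piecewise-linear extension of $F(\Lambda_n):=\sum_{i=1}^n\lambda_ix_i$, and let $\widetilde F$ be its least concave majorant; I would then define $y_n$ to be the slope of $\widetilde F$ on $[\Lambda_{n-1},\Lambda_n]$. Concavity of $\widetilde F$ together with the endpoint equalities $\widetilde F(0)=0$ and $\widetilde F(\Lambda_N)=F(\Lambda_N)$ immediately yield that $y$ is nonincreasing and $\sum_n\lambda_ny_n=\sum_n\lambda_nx_n$. The contact points of $\widetilde F$ with $F$ partition $\{0,1,\dots,N\}$ into maximal blocks $[i,j]$, on each of which $y_l=\bar x_{[i:j]}:=(\sum_{l=i}^j\lambda_lx_l)/(\sum_{l=i}^j\lambda_l)$, and within a block the strict inequality $\bar x_{[i:l]}<\bar x_{[i:j]}$ holds for $i\le l<j$.

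To verify $\Mm_{l=1}^n(x_l,\lambda_l)\le\Mm_{l=1}^n(y_l,\lambda_l)$ for each $n$, my plan is to split into two cases. If $n$ is a block endpoint, the prefix weighted sums of $x$ and $y$ on $[1,n]$ agree, and the inequality reduces to a sequence of single-block averaging steps, each replacing a contiguous subvector $(x_i,\dots,x_j)$ by $(\bar x_{[i:j]},\dots,\bar x_{[i:j]})$ while keeping the weights. If $n$ lies strictly inside a block $[i,j]$, I introduce the auxiliary nonincreasing sequence $y^{(n)}$ obtained by applying the same concave-majorant construction to $F$ restricted to $[0,\Lambda_n]$: the block-endpoint case applied to $y^{(n)}$ gives $\Mm_{l=1}^n(x_l,\lambda_l)\le\Mm_{l=1}^n(y^{(n)}_l,\lambda_l)$, while the inequality $\bar x_{[i:n]}<\bar x_{[i:j]}$ forces $y^{(n)}_l\le y_l$ for $l\le n$, so monotonicity of $\M$ closes the gap.

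The hard part is the single-block averaging inequality. I would begin by writing the constant-averaged block vector as the convex combination $\sum_{k=i}^j p_kv^{(k)}$ with $p_k=\lambda_k/\sum_{l=i}^j\lambda_l$ of the \emph{constant-filled} vectors $v^{(k)}$ (which coincide with the original outside the block and equal $x_k$ throughout the block); Jensen concavity of $\M$ in $x$ then gives $\M(y,\lambda)\ge\sum_kp_k\M(v^{(k)},\lambda)$, and the reduction principle collapses each $v^{(k)}$ into a shorter vector with the block replaced by the single entry $x_k$ of weight $\sum_{l=i}^j\lambda_l$. Closing the gap between this lower bound and $\M(x,\lambda)$ is the main obstacle: without the symmetry hypothesis used in Lemma~\ref{lem:rearrangement}, position swaps are unavailable, so I would iterate pairwise equalizations---each justified by Jensen concavity on a two-dimensional section---and invoke the structural block inequality $\bar x_{[i:l]}<\bar x_{[i:j]}$ to argue monotonic improvement at each step. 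The $\R$-weighted case then follows from the $\Q$-weighted one by continuity in the weights combined with Proposition~\ref{prop:fieldex}.
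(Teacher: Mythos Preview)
There is a genuine gap, and it originates in a defect of the printed statement: the paper's own proof uses \emph{symmetry} of $\M$ at two essential points (the rearrangement estimate $\Mm_{k=1}^{\Lambda_n}s_k\le\Mm_{k=1}^{\Lambda_n}s_k^*$ and the identity $\Mm_{k=1}^{\Lambda_n}s_k^*=\tfrac1o\sum_{j}\Mm_{k=1}^{\Lambda_n}s^*_{\pi^j(k)}$), even though symmetry is absent from the hypotheses. Without it the theorem is actually false. The $\Q$-weighted mean $\M(x,\lambda):=x_m$, where $m$ is the largest index with $\lambda_m>0$, is monotone and affine in $x$ (hence Jensen-concave) and satisfies all four axioms; for $x=(1,3)$, $\lambda=(1,1)$ the two prefix means are $1$ and $3$, yet any nonincreasing $y$ with $y_1+y_2=4$ has $y_2\le2<3$, so no admissible $y$ exists. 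Your plan to dispense with symmetry and close the block-averaging step by ``iterated pairwise equalizations'' therefore cannot succeed in general, and the sketch you give for that step is not a proof even if symmetry is added: from $\M(\bar y,\lambda)\ge\sum_kp_k\M(v^{(k)},\lambda)$ you still owe the inequality $\sum_kp_k\M(v^{(k)},\lambda)\ge\M(x,\lambda)$, and nothing in your outline supplies it.

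Your $y$ is also not the paper's. The paper reduces to $\lambda\in W_N^0(\Z)$, expands $x$ to the unit-weight sequence $(s_k)_{k=1}^{\Lambda_N}$, takes its nonincreasing rearrangement $(s_k^*)$, and sets $y_n:=\lambda_n^{-1}\sum_{k=\Lambda_{n-1}+1}^{\Lambda_n}s_k^*$; for instance, for $x=(1,3,2)$ and $\lambda=\vone$ this yields $y=(3,2,1)$, whereas your concave-majorant construction yields $y=(2,2,2)$. The decisive step in the paper is then a single application of Jensen concavity after averaging over the cyclic permutation $\pi$ that rotates each block $\{\Lambda_{i-1}+1,\dots,\Lambda_i\}$: symmetry makes every summand equal to $\Mm_{k=1}^{\Lambda_n}s_k^*$, and concavity turns the average of the permuted sequences into the block-averaged sequence, which by the reduction principle is exactly $\Mm_{i=1}^n(y_i,\lambda_i)$. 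This handles all $n$ at once and bypasses your block-endpoint/interior dichotomy and the unproved single-block averaging inequality entirely.
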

\begin{proof}
First assume that $\lambda \in W_N^0(\Z)$.
Define $(s_k)_{k=1}^{\Lambda_N}$ by $s_k=x_n$ for $k \in \{\Lambda_{n-1}+1,\dots,\Lambda_n\}$. Let $(s^*_k)$ be a nondecreasing rearrangement of $(s_k)$ and define the sequence $(y_n)_{n=1}^N$ as 
\Eq{*}{
y_n:=\Ar_{k=\Lambda_{n-1}+1}^{\Lambda_n} s_k^*=\dfrac{s_{\Lambda_{n-1}+1}^*+\cdots+s^*_{\Lambda_n}}{\lambda_n},\quad n \in \{1,\dots,N\}.
} 

Obviously
\Eq{*}{
\sum_{n=1}^N \lambda_ny_n=\sum_{k=1}^{\Lambda_N} s_k^*=\sum_{k=1}^{\Lambda_N} s_k=\sum_{n=1}^N \lambda_nx_n.
}
Moreover, as both $\A$ and $(s^*_k)$ are monotone, then so is $(y_n)$. Furthermore as $\M$ is symmetric and monotone for all $n \in \{1,\dots,N\}$ we have
\Eq{*}{
\Mm_{i=1}^n (x_i,\lambda_i) =\Mm_{k=1}^{\Lambda_n} (s_k,1) \le \Mm_{k=1}^{\Lambda_n} (s_k^*,1). 
}

Now define a permutation $\pi \colon \{1,2,\dots,\Lambda_N\} \to \{1,2,\dots,\Lambda_N\}$ (in a cyclic notion) by 
\Eq{*}{
\pi:=(1,\dots,\Lambda_1)(\Lambda_1+1,\dots,\Lambda_2)\cdots(\Lambda_{n-1}+1,\dots,\Lambda_n)
}
Denote briefly the order of $\pi$ by $\pi_{\rm ord}=\lcm(\Lambda_1,\dots,\Lambda_N)$. 
Then, by Jensen-concavity and symmetry of $\M$, for all $n \in\{1,\dots,N\}$ we obtain 
\Eq{*}{
\Mm_{i=1}^n (x_i,\lambda_i) 
&\le \Mm_{k=1}^{\Lambda_n} (s_k^*,1)
=\frac 1{\pi_{\rm ord}} \sum_{j=1}^{\pi_{\rm ord}} \Mm_{k=1}^{\Lambda_n} \big(s_{\pi^j(k)}^*,1\big)
\le  \Mm_{k=1}^{\Lambda_n} \Big(\frac 1{\pi_{\rm ord}} \sum_{j=1}^{\pi_{\rm ord}} s_{\pi^j(k)}^*,1\Big)\\
&=  \Mm_{k=1}^{\Lambda_n} \Big(\Ar_{j=1}^{\pi_{\rm ord}} s_{\pi^j(k)}^*,1\Big)
= \Mm_{i=1}^n\Big(\Ar_{k=\Lambda_{i-1}+1}^{\Lambda_i} s_k^*,\lambda_i\Big)
= \Mm_{i=1}^n (y_i,\lambda_i),
}
which is \eq{JCin}. 

For $\lambda \in W_N^0(\Q)$ there exists a natural number $K \in \N$ such that $K\lambda \in W_N^0(\Z)$. Then \eq{JCin} is true for a triple $(x,y,K\lambda)$ which, by nullhomogeneity in weights implies that it is remains valid for a triple $(x,y,\lambda)$, too.

Finally, if $\M$ is $\R$-weighted mean which is continuous in its weights then, applying above consideration, we obtain that \eq{JCin} is valid for all $\lambda \in W_N^0(\Q)$. However in this case both sides of \eq{JCin} are continuous in $\lambda$, thus inequality \eq{JCin} can be extended to whole $W_N^0(\R)$.
\end{proof}

Let us now recall two technical results concerning divergence of sequences.

\begin{lem}[\cite{PalPas20}, Lemma 4.1] 
\label{lem:equicon}
The sequence $(\Lambda_n)$ and the series $\sum\lambda_n/\Lambda_n$ are equi-convergent (either both of them are convergent or both of them are divergent).
\end{lem}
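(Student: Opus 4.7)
The plan is to prove the two implications separately, exploiting in both directions that $(\Lambda_n)$ is nondecreasing (since all $\lambda_n>0$) and bounded below by $\lambda_1$.

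First I would handle the easy direction: if $(\Lambda_n)$ converges to some finite $\Lambda$, then $\Lambda_n\ge\lambda_1>0$ for every $n$, hence
\[
\frac{\lambda_n}{\Lambda_n}\le\frac{\lambda_n}{\lambda_1},
\]
and $\sum\lambda_n=\Lambda<\infty$, so $\sum\lambda_n/\Lambda_n$ converges by direct comparison.

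For the opposite direction, I would assume $\Lambda_n\to+\infty$ and show that the partial sums of $\sum\lambda_n/\Lambda_n$ are unbounded. The key tool is a telescoping block estimate: for any $n\in\N$, pick $m=m(n)$ as the smallest index with $\Lambda_m\ge 2\Lambda_n$ (which exists by divergence). Since $\Lambda_k$ is nondecreasing, $\Lambda_k\le\Lambda_m$ for $k\le m$, and therefore
\[
\sum_{k=n+1}^{m}\frac{\lambda_k}{\Lambda_k}\ge\sum_{k=n+1}^{m}\frac{\lambda_k}{\Lambda_m}=\frac{\Lambda_m-\Lambda_n}{\Lambda_m}\ge\tfrac12.
\]
Iterating this construction starting from $n_0=1$ and defining $n_{j+1}:=m(n_j)$ produces a sequence of disjoint blocks each contributing at least $1/2$ to the series, so the partial sums diverge to $+\infty$.

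There is no real obstacle here; the argument is classical and elementary. The only point to pay attention to is the existence of $m(n)$, which is precisely where the standing assumption $\lambda_n>0$ (together with $\Lambda_n\to\infty$) is used, ensuring the blocks are finite and non-degenerate. No additional hypothesis on the mean $\M$ is needed since the statement only concerns the weight sequence $\lambda$.
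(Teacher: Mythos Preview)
Your argument is correct and complete. Both directions are handled cleanly: the comparison $\lambda_n/\Lambda_n\le\lambda_n/\lambda_1$ gives convergence immediately when $\sum\lambda_n<\infty$, and the doubling-block estimate is a standard and fully rigorous way to show divergence when $\Lambda_n\to\infty$.

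As for comparison with the paper: this lemma is not proved here at all --- it is simply quoted from \cite{PalPas20a} (Lemma~4.1) and used as a black box. So there is no in-paper argument to compare your proof against. Your proof is the classical Abel--Dini-type argument for this fact; an alternative often seen is the telescoping inequality $\lambda_n/\Lambda_n\ge\log(\Lambda_n/\Lambda_{n-1})$ (from $1-t\le-\log t$ with $t=\Lambda_{n-1}/\Lambda_n$), which yields $\sum_{n=2}^{N}\lambda_n/\Lambda_n\ge\log(\Lambda_N/\Lambda_1)\to\infty$. Either route works and neither requires anything beyond the positivity of the $\lambda_n$.
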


\begin{lem}[\cite{PalPas20}, Lemma 4.2]
\label{lem:maxk1n}
If $\lambda_n/\Lambda_n \to 0$ and $\Lambda_n\to\infty$, then 
\Eq{*}{
\lim_{n \to \infty} \frac{\max(\lambda_1,\dots,\lambda_n)}{\Lambda_n} =0.
}
\end{lem}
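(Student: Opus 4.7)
The plan is a routine $\varepsilon$-$N$ argument that splits the sequence into a fixed initial segment and a tail which is uniformly small relative to $\Lambda_n$.

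Fix $\varepsilon > 0$. The first step is to use the hypothesis $\lambda_n/\Lambda_n \to 0$ to choose an index $N$ such that $\lambda_n \le \varepsilon\,\Lambda_n$ for every $n \ge N$. This handles all large-index entries of the sequence $(\lambda_n)$ simultaneously, regardless of which ones realise the maximum.

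The second step is to split
\[
\max(\lambda_1,\dots,\lambda_n) = \max\bigl(\,K,\ \max(\lambda_{N+1},\dots,\lambda_n)\,\bigr) \qquad\text{for } n\ge N,
\]
where $K:=\max(\lambda_1,\dots,\lambda_N)$ is a constant independent of $n$. For each index $k$ with $N<k\le n$, monotonicity of $\Lambda_k$ gives $\lambda_k \le \varepsilon\,\Lambda_k \le \varepsilon\,\Lambda_n$, so the tail maximum is at most $\varepsilon\,\Lambda_n$.

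The final step is to divide by $\Lambda_n$ and use $\Lambda_n\to\infty$ to make the contribution $K/\Lambda_n$ from the fixed initial block smaller than $\varepsilon$ for all sufficiently large $n$. Combining the two bounds yields $\max(\lambda_1,\dots,\lambda_n)/\Lambda_n \le \varepsilon$ eventually, which proves the claim. There is no real obstacle here; the only subtlety worth noting is the necessity of $\Lambda_n\to\infty$, which is precisely what kills the constant contribution $K$ coming from the initial segment (without it one could have $\lambda_1$ large and all subsequent $\lambda_n$ zero, making the conclusion fail).
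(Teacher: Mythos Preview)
Your argument is correct and is the standard $\varepsilon$-$N$ proof of this elementary fact. Note that the present paper does not give its own proof of this lemma: it is merely quoted from \cite{PalPas20a} (Lemma~4.2) and used as a black box in Section~\ref{sec:1gamma}, so there is nothing here to compare your approach against.
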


Next lemma a generalization of \cite[Proposition~3.1]{PalPas16} where it was stated in a nonweighted case (which refers to $\lambda=\vone$). 
\begin{lem}\label{lem:finite}
Let $\M$ be an $R$-weighted mean $\M$ on $I$ and $\lambda \in W^0(R)$. Then $C= \Hc\M(\lambda)$ is the smallest extended real number such that
\Eq{E:finite}{
\sum_{n=1}^{N} \lambda_n \cdot \Mm_{i=1}^n\big(x_i,\lambda_i\big) \le C \cdot \sum_{n=1}^{N} \lambda_nx_n
\quad \text{for all $N \in \N$ and $(x_n)\in I^N$}.
}
\end{lem}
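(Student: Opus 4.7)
The plan is to introduce the auxiliary quantity $\tilde{C}$, defined as the smallest extended real number for which \eq{E:finite} holds for all $N$ and all $(x_n)\in I^N$, and to prove $\tilde C=\Hc\M(\lambda)$ by two inequalities. Both directions rely on the fact that $\inf I=0$ forces every value of $\M$ to be nonnegative, so partial sums of both sides of the Hardy inequality are nondecreasing in $N$.

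For the bound $\Hc\M(\lambda)\le\tilde C$, the strategy is direct passage to the limit. If $\tilde C=+\infty$ there is nothing to show, and otherwise, for any $x\in\ell^1(\lambda)$ and any $N\in\N$, applying \eq{E:finite} to the truncation $(x_1,\dots,x_N)$ yields $\sum_{n=1}^N\lambda_n\Mm_{i=1}^n(x_i,\lambda_i)\le \tilde C\sum_{n=1}^{\infty}\lambda_n x_n$. Letting $N\to\infty$ on the left (using the nonnegativity noted above) reproduces \eq{E:WHI} with constant $\tilde C$, whence $\Hc\M(\lambda)\le\tilde C$.

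For the reverse bound $\tilde C\le \Hc\M(\lambda)$, I plan a tail-truncation argument. The case $\Hc\M(\lambda)=+\infty$ is trivial, so assume it is finite. Fix $N\in\N$, $(x_1,\dots,x_N)\in I^N$, and $\varepsilon>0$. Since $\inf I=0$, for each $n>N$ one may select $x_n\in I$ with $\lambda_n x_n<\varepsilon/2^n$; this produces an extended sequence lying in $\ell^1(\lambda)$ with tail sum below $\varepsilon$. Applying \eq{E:WHI} to the extended sequence and discarding the nonnegative terms beyond index $N$ on the left gives
\[
\sum_{n=1}^N\lambda_n\Mm_{i=1}^n(x_i,\lambda_i)\le \Hc\M(\lambda)\Big(\sum_{n=1}^N\lambda_nx_n+\varepsilon\Big),
\]
and taking $\varepsilon\to 0^+$ delivers \eq{E:finite} with $C=\Hc\M(\lambda)$, hence $\tilde C\le\Hc\M(\lambda)$.

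There is no substantive obstacle: the only point requiring care is ensuring that the tail values $x_n$ can be chosen inside $I$ while keeping $\sum_{n>N}\lambda_n x_n$ arbitrarily small, which is precisely where the hypothesis $\inf I=0$ (built into the definition of the Hardy setup in Section~\ref{sec:DefsHardy}) is used.
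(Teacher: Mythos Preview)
Your proposal is correct and follows essentially the same approach as the paper: a direct limit $N\to\infty$ gives $\Hc\M(\lambda)\le\tilde C$, and for the converse you extend a finite vector by a tail with arbitrarily small $\ell^1(\lambda)$-mass, apply \eq{E:WHI}, drop the nonnegative tail terms on the left, and let the tail bound tend to zero. The paper carries this out with the explicit choice $y_n=\min(1,\lambda_n^{-1})\,2^{N-n}\varepsilon$ for $n>N$, which is exactly an instance of your ``select $x_n\in I$ with $\lambda_n x_n<\varepsilon/2^n$''.
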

\begin{proof}
Fix $\lambda\in W^0(R)$.
In a limit case as $N \to \infty$, \eq{E:finite} implies \eq{E:WHI} with the same constant, thus $C \ge \Hc\M(\lambda)$. 
The remaining part is to verify \eq{E:finite} for $C=\Hc\M(\lambda)$. If $\Hc\M(\lambda)=\infty$, this inequality is trivially satisfied. Thus one can assume that $\M$ is a $\lambda$-weighted Hardy mean. 

Fix $(x_n) \in I^N$, $\varepsilon \in I$ and define
\Eq{*}{
y_n:=\begin{cases}
        x_n & \text{ for }n \le N\\
        \min(1,\lambda_n^{-1}) 2^{N-n}\varepsilon & \text{ for }n > N
       \end{cases}
}
Then we have
\Eq{*}{
\sum_{n=1}^{N} \lambda_n \cdot \Mm_{i=1}^n\big(x_i,\lambda_i\big) 
&=\sum_{n=1}^{N} \lambda_n \cdot \Mm_{i=1}^n\big(y_i,\lambda_i\big)
\le \sum_{n=1}^{\infty} \lambda_n \cdot \Mm_{i=1}^n\big(y_i,\lambda_i\big)
\le \Hc\M(\lambda) \cdot \sum_{n=1}^{\infty} \lambda_ny_n\\
&\le \Hc\M(\lambda) \cdot \Big( \sum_{n=N+1}^\infty 2^{N-n}\varepsilon+\sum_{n=1}^N \lambda_nx_n \Big)
=\Hc\M(\lambda) \cdot \Big( \varepsilon +\sum_{n=1}^N \lambda_nx_n\Big).
}
Now we can simply take $\varepsilon \to 0$ to obtain \eq{E:finite} with $C=\Hc\M(\lambda)$.
\end{proof}

Let us conclude this section with a characterization of the weighted Hardy property for the arithmetic mean. As a matter of fact, there are a substantial background beyond this result as the arithmetic mean is a boundary case in few contexts. First, it is the smallest power mean which does not admit the Hardy property (see the very beginning of this paper). Second, it is the largest concave mean, in particular all results related to Kedlaya inequality are stated for the means which are comparable to the arithmetic mean. Finally, the series which is related to the (nonweighted) Hardy property is divergent for every vector of nonnegative elements except the identically-zero sequence which has some further implications (cf. \cite{Pas1812}).

\begin{prop}\label{prop:Aryth}
Let $\A$ be the arithmetic mean and $\lambda \in W^0(\R)$. Then
\Eq{E:Aryth}{
\Hc\A(\lambda)=\sum_{m=1}^\infty \frac{\lambda_m}{\Lambda_m}.
}

In particular the arithmetic mean is a $\lambda$-Hardy mean if and only if $\sum_{n=1}^\infty \lambda_n<+\infty$.

\end{prop}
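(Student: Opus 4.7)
The strategy is to write out the Hardy sum for the weighted arithmetic mean, interchange the order of summation, and read off both bounds; the upper bound comes straight from the interchange, while sharpness is witnessed by a sequence concentrated on the first coordinate.

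Concretely, since $\A(x,\lambda)=\Lambda_n^{-1}\sum_{i=1}^n\lambda_ix_i$, using Lemma~\ref{lem:finite} one has for every $N\in\N$ and $(x_n)\in I^N$
\[
\sum_{n=1}^N \lambda_n\Ar_{i=1}^n(x_i,\lambda_i)=\sum_{n=1}^N\frac{\lambda_n}{\Lambda_n}\sum_{i=1}^n\lambda_ix_i=\sum_{i=1}^N\lambda_ix_i\sum_{n=i}^N\frac{\lambda_n}{\Lambda_n}\le S\sum_{i=1}^N\lambda_ix_i,
\]
where $S:=\sum_{n=1}^\infty\lambda_n/\Lambda_n$. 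This delivers at once $\Hc\A(\lambda)\le S$.

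For the matching lower bound I would fix $N$ and test with $x_1=1$ together with $x_n$ arbitrarily close to $0$ for $2\le n\le N$ (legitimate since $\inf I=0$; if $0\in I$ take $x_n=0$, otherwise take $x_n=\varepsilon$ with $\varepsilon\to 0^+$ inside $I$). Direct substitution yields a left-hand side equal to $\lambda_1\sum_{n=1}^N\lambda_n/\Lambda_n$ plus an error that vanishes with $\varepsilon$, while the right-hand side equals $C\lambda_1$ up to a similar error. Letting $\varepsilon\to 0^+$ forces $C\ge\sum_{n=1}^N\lambda_n/\Lambda_n$, and then $N\to\infty$ gives $\Hc\A(\lambda)\ge S$.

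The ``in particular'' assertion follows immediately from Lemma~\ref{lem:equicon}: $S$ is finite exactly when $(\Lambda_n)$ converges, i.e.\ exactly when $\sum\lambda_n<\infty$. The only mild bookkeeping point throughout is the endpoint case $0\notin I$, which is dispatched routinely by the $\varepsilon$-perturbation above; the rest is a one-line double-sum manipulation.
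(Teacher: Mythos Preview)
Your argument is correct. The upper bound via interchanging the double sum and the ``in particular'' clause via Lemma~\ref{lem:equicon} are exactly what the paper does. The only difference is in the witness for the lower bound: the paper tests with the geometric-type sequence $x_n=q^n/\lambda_n$ and sends $q\to 0$, obtaining the ratio $(1-q)\sum_m\lambda_m/\Lambda_m$, whereas you test with a sequence concentrated on the first coordinate and send $\varepsilon\to 0$. Both choices concentrate the $\ell^1(\lambda)$-mass at $n=1$ in the limit, so they are variants of the same idea; your version is a touch more transparent and, combined with the finite-$N$ formulation from Lemma~\ref{lem:finite}, avoids having to check that $q^n/\lambda_n\in I$ for all $n$. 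One cosmetic point: writing $x_1=1$ tacitly assumes $1\in I$; simply take $x_1=a$ for any fixed $a\in I$ and the computation is unchanged, since the factor $a$ cancels from both sides.
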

\begin{proof}
Take $x \in \ell^1(\lambda)$ arbitrarily. We have
\Eq{*}{
\sum_{n=1}^\infty \lambda_n \Ar_{k=1}^n(x_k,\lambda_k) 
=\sum_{n=1}^\infty \frac{\lambda_n}{\Lambda_n} \sum_{k=1}^n \lambda_kx_k
=\sum_{n=1}^\infty \Big( \sum_{m=n}^\infty \frac{\lambda_m}{\Lambda_m}\Big) \lambda_nx_n 
\le \sum_{m=1}^\infty \frac{\lambda_m}{\Lambda_m}\cdot \sum_{n=1}^\infty  \lambda_nx_n
}
Thus we obtain the $(\le)$ part of \eq{E:Aryth}. 
To prove the converse inequality fix $q \in (0,1)$ and take a sequence $x_n=\frac{q^n}{\lambda_n}$. Then $\sum_{n=1}^\infty \lambda_nx_n=\frac{q}{1-q}$. Thus
\Eq{*}{
\sum_{n=1}^\infty \lambda_n \Ar_{k=1}^n(x,\lambda) 
=\sum_{n=1}^\infty \Big(\sum_{m=n}^\infty \frac{\lambda_m}{\Lambda_m}\Big) q^n \ge \sum_{m=1}^\infty \frac{\lambda_m}{\Lambda_m} q=(1-q) \Big( \sum_{m=1}^\infty \frac{\lambda_m}{\Lambda_m} \Big)\sum_{n=1}^\infty \lambda_nx_n.
}
In  a limit case as $q \to 0$ we obtain the remaining inequality in \eq{E:Aryth}. Let us emphasize that this proof remains valid in the case $\sum_{m=1}^\infty \frac{\lambda_m}{\Lambda_m}=+\infty$. 
Finally, as the series $(\lambda_n)$ and $(\frac{\lambda_n}{\Lambda_n})$ are equiconvergent (see Lemma~\ref{lem:equicon}) we obtain the moreover part.
\end{proof}

\section{Main result}

In what follows we are heading towards the sufficient condition for $\M$ and $\lambda$ to validate the equality $\Hc[\lambda]{} = \Hc[\vone]{}$. In view of Theorem~\ref{thm:mu1} the $(\le)$ inequality is satisfied for all symmetric, monotone means and all vectors $\lambda$. Therefore we need to show the converse inequality. 
The idea is similar to the one which was used in \cite[section~5]{PalPas19b}.

First we generalize Lemma~\ref{lem:L5.2} by replacing $\vone$ by a vector $\lambda$ satisfying certain properties.

\begin{lem}
\label{lem:1gamma}
Let $\M$ be a symmetric and monotone $R^*$-weighted mean on $I$. Let $\psi \in W^0(R^*)$ and $\lambda \in W^0(R^*)$ with $\Lambda_n \to \infty$ and $\lambda_n/\Lambda_n\to 0$. Then the inequality 
 \Eq{E:1gamma}{
\sum_{m=1}^M \psi_m \Mm_{i=1}^m \big(y_i,\psi_i\big)&\le \Hc\M(\lambda) \sum_{m=1}^M \psi_m y_m.
} 
is valid for every $M \in \N$ and every nonincreasing sequence $y \in I^M$.
\end{lem}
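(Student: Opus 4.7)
The plan is to deduce \eq{E:1gamma} from the $\lambda$-weighted Hardy inequality applied to a carefully chosen step sequence that mimics the $\psi$-weighted block structure of $y$, with the breakpoint misalignment controlled via the decay hypothesis $\lambda_n/\Lambda_n\to 0$. For each integer $K\in\N$, let $n_m=n_m(K)$ be the smallest positive integer with $\Lambda_{n_m}\ge K\Psi_m$, where $\Psi_m:=\psi_1+\cdots+\psi_m$, and define a nonincreasing sequence $x\in I^{n_M}$ by $x_n:=y_m$ for $n_{m-1}<n\le n_m$ (with $n_0:=0$). Since $x$ is nonincreasing, Lemma~\ref{lem:finite} yields
\[
\sum_{n=1}^{n_M}\lambda_n\Mm_{i=1}^n(x_i,\lambda_i)\le \Hc\M(\lambda)\sum_{n=1}^{n_M}\lambda_nx_n.
\]

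Write $L:=\max(\lambda_1,\dots,\lambda_{n_M})$. The inequalities $K\Psi_m\le\Lambda_{n_m}<K\Psi_m+L$ readily give the upper bound $\sum_{n=1}^{n_M}\lambda_nx_n=\sum_m(\Lambda_{n_m}-\Lambda_{n_{m-1}})y_m\le K\sum_m\psi_my_m+L\sum_m y_m$. For the left-hand side, the key observation is the pointwise estimate $\chi_{x,\lambda}(t)\ge \chi_{y,\psi}(t/K)$ on $[0,K\Psi_M)$: wherever the two disagree, $\chi_{x,\lambda}$ still sits in an earlier (hence larger) $y$-block than the rescaled $\chi_{y,\psi}$. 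Combining this with monotonicity of $\M$ and Lemma~\ref{lem:decr} applied to the nonincreasing $\chi_{x,\lambda}$, for each $n\in\{n_{m-1}+1,\dots,n_m-1\}$ one obtains
\[
\Mm_{i=1}^n(x_i,\lambda_i)=\Mm_0^{\Lambda_n}\chi_{x,\lambda}(t)\,dt\ge \Mm_0^{K\Psi_m}\chi_{x,\lambda}(t)\,dt\ge \Mm_0^{K\Psi_m}\chi_{y,\psi}(t/K)\,dt=\Mm_{j=1}^m(y_j,\psi_j),
\]
the last equality being nullhomogeneity in the weights. Multiplying by $\lambda_n$, using $\Lambda_{n_m-1}-\Lambda_{n_{m-1}}\ge K\psi_m-2L$, and summing over $m$ then produces the lower bound $\sum_{n=1}^{n_M}\lambda_n\Mm_{i=1}^n(x_i,\lambda_i)\ge K\sum_m\psi_m\Mm_{j=1}^m(y_j,\psi_j)-2LMy_1$.

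Combining the two bounds, dividing by $K$, and letting $K\to\infty$ reduces the claim to verifying $L/K\to 0$. This is where the hypotheses enter decisively: $\Lambda_{n_M(K)}\ge K\Psi_M\to\infty$ forces $n_M(K)\to\infty$, and Lemma~\ref{lem:maxk1n}, which combines both assumptions on $\lambda$, yields $L/\Lambda_{n_M(K)}\to 0$, whence $L/K\le L\Psi_M/\Lambda_{n_M(K)}\to 0$. The main obstacle is the unavoidable mismatch between the discrete set $\{\Lambda_n\}_{n\in\N}$ of attainable breakpoints and the targeted values $K\Psi_m$; the integer scaling $K$ is introduced precisely so that this mismatch becomes negligible in the limit, and the same bound $L/K\to 0$ also ensures that $n_m-1>n_{m-1}$ for $K$ sufficiently large, so that the index ranges used above are nonempty.
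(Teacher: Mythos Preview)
Your argument is correct in substance and takes a genuinely different route from the paper. One small slip: the displayed inequality $L/K\le L\Psi_M/\Lambda_{n_M(K)}$ is the wrong way round, since $\Lambda_{n_M}\ge K\Psi_M$ gives $L/K\ge L\Psi_M/\Lambda_{n_M}$. The fix is immediate: use instead the minimality bound $\Lambda_{n_M-1}<K\Psi_M$, which yields $L/K< L\Psi_M/\Lambda_{n_M-1}$, and this tends to $0$ by Lemma~\ref{lem:maxk1n} (together with $\lambda_{n_M}/\Lambda_{n_M}\to0$). Equivalently, write $L/K=(L/\Lambda_{n_M})(\Lambda_{n_M}/K)$ and observe that the second factor is squeezed between $\Psi_M$ and $\Psi_M/(1-L/\Lambda_{n_M})$.

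As for the comparison: the paper rescales the $\lambda$-partition onto $[0,1)$, introduces an auxiliary nonincreasing function $C_j$ via infima of means, and controls the error by splitting the integral into three pieces $P_j,Q_j,R_j$ handled through Abel summation; your approach instead scales the $\psi$-partition up by an integer factor $K$, snaps its breakpoints to the nearest $\Lambda_n$, and applies the finite $\lambda$-Hardy inequality (Lemma~\ref{lem:finite}) directly to the resulting step sequence, with all errors absorbed into a single $O(L/K)$ term. Both rely on Lemma~\ref{lem:decr} and on Lemma~\ref{lem:maxk1n} in an essential way, but your discretized construction avoids the Riemann-integral and Abel-summation machinery and makes the role of the mismatch bound $L/K\to0$ more transparent. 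The paper's approach, on the other hand, does not need to verify that the index blocks are nonempty for large parameter values.
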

Its technical and quite lengthy proof is shifted to section~\ref{sec:1gamma}. As a direct consequence, using some already known results, we can prove our next theorem. It is inspired by a proof of Theorem~\ref{thm:mu1}.

\begin{thm}\label{thm:main}
For every symmetric, monotone $R$-weighted mean $\M$ and a vector $\lambda \in W^0(R)$ such that $\Lambda_n\to \infty$ and $\lambda_n/\Lambda_n\to 0$, the equality $\Hc\M(\vone)=\Hc\M(\lambda)$ is valid.
\end{thm}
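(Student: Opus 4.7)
Theorem~\ref{thm:mu1} already supplies the $(\ge)$ direction via $\Hc\M(\lambda)\le\sup\Hc\M=\Hc\M(\vone)$, so the task reduces to establishing the reverse inequality $\Hc\M(\vone)\le\Hc\M(\lambda)$. My plan is to work with the unique $R^*$-weighted extension $\widetilde\M$ of $\M$ supplied by Proposition~\ref{prop:fieldex}, which remains symmetric and monotone and has the same $\vone$- and $\lambda$-Hardy constants as $\M$ (because both sides of \eq{E:WHI} only evaluate $\M$ on $R$-weighted data). By Lemma~\ref{lem:finite} it then suffices to verify the finite-$N$ bound
\Eq{*}{
\sum_{n=1}^N \Mm_{i=1}^n(x_i,1)\le\Hc\M(\lambda)\cdot\sum_{n=1}^N x_n
}
for every $N\in\N$ and every $x\in I^N$.

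The core of the argument is a two-step chaining that mirrors the proof of Theorem~\ref{thm:mu1}. Fix $N\in\N$ and $x\in I^N$. First, applying Lemma~\ref{lem:rearrangement} to the sequence $x$ with the all-ones weight $(1,\dots,1)\in W_N^0(R)$ produces an $M\in\N$, a nonincreasing $y\in I^M$, and weights $\psi\in W_M^0(R)$ with
\Eq{*}{
\sum_{n=1}^N \Mm_{i=1}^n(x_i,1)\le\sum_{m=1}^M \psi_m\Mm_{i=1}^m(y_i,\psi_i)\quad\text{and}\quad\sum_{n=1}^N x_n=\sum_{m=1}^M \psi_m y_m.
}
Next, extend $\psi$ to an infinite sequence in $W^0(R^*)$ in any way whatsoever (only its first $M$ entries are used below). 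Then Lemma~\ref{lem:1gamma}, applied with this extended $\psi$, the nonincreasing $y$, and the $\lambda$ of the theorem (which by hypothesis satisfies $\Lambda_n\to\infty$ and $\lambda_n/\Lambda_n\to 0$), delivers
\Eq{*}{
\sum_{m=1}^M \psi_m\Mm_{i=1}^m(y_i,\psi_i)\le\Hc\M(\lambda)\sum_{m=1}^M \psi_m y_m=\Hc\M(\lambda)\sum_{n=1}^N x_n.
}
Chaining the two displays yields the target inequality, and Lemma~\ref{lem:finite} then gives $\Hc\M(\vone)\le\Hc\M(\lambda)$.

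All of the non-trivial work has been exported to Lemma~\ref{lem:1gamma}: once that result is in hand, the proof above is essentially a formal re-run of the scheme producing Theorem~\ref{thm:mu1}, with the $\vone$-Hardy step of Lemma~\ref{lem:L5.2} replaced by its $\lambda$-Hardy upgrade in Lemma~\ref{lem:1gamma}, and with a minor $R$-versus-$R^*$ bookkeeping handled by Proposition~\ref{prop:fieldex}. Accordingly, the real obstacle lies not in the chaining step above but in the upcoming verification of Lemma~\ref{lem:1gamma}, where I expect the hypotheses $\Lambda_n\to\infty$ and $\lambda_n/\Lambda_n\to 0$ to enter essentially, presumably through Lemmas~\ref{lem:equicon} and~\ref{lem:maxk1n}.
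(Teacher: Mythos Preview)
Your proposal is correct and follows essentially the same route as the paper's own proof: extend to $R^*$ via Proposition~\ref{prop:fieldex}, reduce to finite $N$ via Lemma~\ref{lem:finite}, apply Lemma~\ref{lem:rearrangement} with the all-ones weight to pass to a nonincreasing sequence, then invoke Lemma~\ref{lem:1gamma} and chain. You are even a bit more careful than the paper in noting that $\psi$ must be extended to an infinite sequence before Lemma~\ref{lem:1gamma} formally applies.
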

\begin{proof}
Let $N \in \N$ and $x \in I^N$. First, by Proposition~\ref{prop:fieldex} we may extend $\M$ to the \mbox{$R^*$-weighted} mean. 
Next, by Lemma~\ref{lem:rearrangement} there exists $M\in \N$, a nonincreasing sequence $y \in I^M$ and $\psi \in W^0_M(R^*)$ such that $\sum_{n=1}^N x_n=\sum_{n=1}^M \psi_ny_n$ and 
\Eq{*}{
\sum_{n=1}^N \Mm_{i=1}^n \big(x_i,1\big)&\le \sum_{m=1}^M \psi_m \Mm_{i=1}^m \big(y_i,\psi_i\big).
}
Now, applying Lemma~\ref{lem:1gamma} we obtain
\Eq{*}{
\sum_{n=1}^N \Mm_{i=1}^n \big(x_i,1\big) \le \sum_{m=1}^M \psi_m \Mm_{i=1}^m \big(y_i,\psi_i\big) \le \Hc\M(\lambda) \sum_{m=1}^{M} \psi_my_m = \Hc\M(\lambda) \sum_{n=1}^{N} x_n.
}

Finally, by Lemma~\ref{lem:finite} we get $\Hc\M(\vone) \le \Hc\M(\lambda)$. This ends the proof as the converse inequality is a direct implication of Theorem~\ref{thm:mu1}.
\end{proof}

\subsection{Partition ordering and cut theorem} 
In this section we intend to show some monotonicity-type result for Hardy constant. First let us introduce some preorder on vector of real numers

\begin{defin}[Partition ordering]
 We define the order $\prec$ on infinite sequences of real numbers in the following way:
 $(\alpha_k)_{k=1}^\infty \prec (\beta_n)_{n=1}^{\infty}$ if there exists a nondecreasing, divergent sequence $(n_k)_{k=0}^\infty$ with $n_0=0$ such that
 $\alpha_k=\sum\limits_{n=n_{k-1}+1}^{n_k} \beta_n$ (for $n_k=n_{k-1}$ we assume $\alpha_k=0$).
 \end{defin}

It can be shown that if we restrict our consideration to a vectors with all positive entries then $\prec$ is the partial order. As a matter of fact, this order is related to Hardy constant
 
\begin{thm}[Cut theorem]\label{thm:cut}
 Let $\M$ be a monotone and Jensen concave
$\Q$-weighted mean on $I$ (resp. $\R$-weighted mean on $I$ which is continuous in its weights). The mapping 
$\Hc\M$ is monotone with respect to $\prec$. More precisely for every $\psi,\lambda \in W^0(R)$ with $\psi \prec \lambda$ we have $\Hc\M(\psi) \le \Hc\M(\lambda)$ (here $R =\Q$ or $R=\R$ depending on the context).
\end{thm}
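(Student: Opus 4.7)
The plan is to combine Theorem~\ref{thm:samesum}, which replaces $x$ by a nonincreasing companion of the same weighted sum, with Lemma~\ref{lem:decr}, which says that partial $\M$-means of a nonincreasing step function decrease as the window grows. First I would use Lemma~\ref{lem:finite} to reduce to the finite truncation: fix $K\in\N$ and $x\in I^K$ and aim for
\[
\sum_{k=1}^K \psi_k\Mm_{i=1}^k(x_i,\psi_i)\le \Hc\M(\lambda)\sum_{k=1}^K \psi_k x_k.
\]
Let $(n_k)_{k=0}^\infty$ be the nondecreasing divergent sequence witnessing $\psi\prec\lambda$; since all $\psi_k$ are positive one has $n_k>n_{k-1}$ strictly, and $\psi_k=\sum_{n=n_{k-1}+1}^{n_k}\lambda_n$.

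Next I would invoke Theorem~\ref{thm:samesum} on the data $(x,(\psi_1,\dots,\psi_K))$, which is permitted because $\M$ is monotone and Jensen concave (and in the $\R$-case continuous in the weights). This produces a nonincreasing $y\in I^K$ with $\sum_{k=1}^K\psi_k y_k=\sum_{k=1}^K\psi_k x_k$ and $\Mm_{i=1}^k(x_i,\psi_i)\le \Mm_{i=1}^k(y_i,\psi_i)$ for every $k\le K$. I would then introduce the \emph{stretched} vector $\tilde y\in I^{n_K}$ by setting $\tilde y_n:=y_k$ whenever $n_{k-1}<n\le n_k$; since $y$ is nonincreasing so is $\tilde y$, and in the integral-type notation both $\Mm_{i=1}^k(y_i,\psi_i)$ and $\Mm_{i=1}^{n_k}(\tilde y_i,\lambda_i)$ equal the $\M$-mean of the same step function on $[0,\Lambda_{n_k})$, hence
\[
\Mm_{i=1}^k(y_i,\psi_i)=\Mm_{i=1}^{n_k}(\tilde y_i,\lambda_i)\qquad(k=1,\dots,K).
\]

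The remainder of the argument is bookkeeping with finite sums. Because $\tilde y$ is nonincreasing, Lemma~\ref{lem:decr} (applied, if needed, after extending $\M$ to its $R^*$-counterpart via Proposition~\ref{prop:fieldex}) yields $\Mm_{i=1}^{n_k}(\tilde y_i,\lambda_i)\le \Mm_{i=1}^n(\tilde y_i,\lambda_i)$ for every $n\in(n_{k-1},n_k]$. Multiplying by $\lambda_n$, summing over $n$ in the block $(n_{k-1},n_k]$ and then over $k$, and finally invoking Lemma~\ref{lem:finite} for the pair $(\tilde y,\lambda)$, I obtain
\[
\sum_{k=1}^K\psi_k\Mm_{i=1}^{n_k}(\tilde y_i,\lambda_i)\le \sum_{n=1}^{n_K}\lambda_n\Mm_{i=1}^n(\tilde y_i,\lambda_i)\le \Hc\M(\lambda)\sum_{n=1}^{n_K}\lambda_n\tilde y_n=\Hc\M(\lambda)\sum_{k=1}^K\psi_k x_k,
\]
which together with the two inequalities from the previous paragraph closes the chain and, via Lemma~\ref{lem:finite} once more, gives $\Hc\M(\psi)\le \Hc\M(\lambda)$.

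The main obstacle is conceptual rather than computational: one has to recognise that after Theorem~\ref{thm:samesum} has delivered a monotone $y$, the correct pivot is the stretched companion $\tilde y$, because only there does Lemma~\ref{lem:decr} yield the block-wise comparison $\Mm_{i=1}^{n_k}(\tilde y_i,\lambda_i)\le \Mm_{i=1}^n(\tilde y_i,\lambda_i)$ that converts a $\psi$-indexed sum of means into a $\lambda$-indexed one, making a second, direct application of Lemma~\ref{lem:finite} possible.
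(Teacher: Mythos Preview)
Your argument is correct and follows essentially the same route as the paper's proof: the paper also applies Theorem~\ref{thm:samesum} to pass to a nonincreasing $y$, then uses Lemma~\ref{lem:decr} on the associated step function and closes with Lemma~\ref{lem:finite}. Your ``stretched'' vector $\tilde y$ is precisely the sequence $\big(\chi_{y,\psi}(\Lambda_{n-1})\big)_{n=1}^{n_K}$ that the paper works with in integral-type notation, so the two presentations differ only in language (and the extension via Proposition~\ref{prop:fieldex} is unnecessary here since $\Q$ and $\R$ are already fields).
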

\begin{proof}
Take $M \in \N$ and $x \in I^M$. By Theorem~\ref{thm:samesum}, there exists nonincreasing sequence $y\in I^M$ such that $\sum_{m=1}^M \psi_m x_m = \sum_{m=1}^M \psi_m y_m$ and 
 \Eq{*}{
\Mm_{i=1}^m (x_i,\psi_i) \le \Mm_{i=1}^m (y_i,\psi_i) \text{ for all }m \in\{1,\dots,M\}.
}

With the usual notation $\Lambda_n=\lambda_1+\dots+\lambda_n$ and $\Psi_m:=\psi_1+\dots+\psi_m$ (with $\Psi_0=\Lambda_0=0$), by $\psi \prec \lambda$ there exists a sequence $(n_m)_{m=1}^\infty$ such that $\Psi_m=\Lambda_{n_m}$ for all $m \in\N_+ \cup \{0\}$. Denote briefly $N:=n_M$, i.e. $\Psi_M=\Lambda_N$.
Using all these facts jointly with Lemma~\ref{lem:decr} we get
\Eq{*}{
\sum_{m=1}^M \psi_m \Mm_{i=1}^m (y_i,\psi_i) 
&= \sum_{m=1}^M \psi_m \Mm_{0}^{\Psi_m} \chi_{y,\psi}(t) \:dt
=\sum_{m=1}^M \sum_{n=n_{m-1}+1}^{n_m}\lambda_n \Mm_0^{\Lambda_{n_m}} \chi_{y,\psi}(t) \:dt\\
&\le
\sum_{m=1}^M \sum_{n=n_{m-1}+1}^{n_m}\lambda_n \Mm_0^{\Lambda_n} \chi_{y,\psi}(t) \:dt
=\sum_{n=1}^{N}\lambda_n \Mm_0^{\Lambda_{n}} \chi_{y,\psi}(t) \:dt
}
Let us now observe that $\chi_{y,\psi}$ is constant on every interval $[\Lambda_{i-1},\Lambda_i)$. Therefore by Lemma~\ref{lem:finite} we have 
\Eq{*}{
\sum_{n=1}^{N}\lambda_n \Mm_0^{\Lambda_{n}} \chi_{y,\psi}(t) \:dt
=\sum_{n=1}^{N}\lambda_n \Mm_{i=1}^n\big(\chi_{y,\psi}(\Lambda_{i-1}),\lambda_i\big) 
&\le \Hc\M(\lambda) \sum_{n=1}^{N}\lambda_n\chi_{y,\psi}(\Lambda_{n-1}).
}
But
\Eq{*}{
\sum_{n=1}^{N}\lambda_n\chi_{y,\psi}(\Lambda_{n-1})
&= \sum_{n=1}^{N}\int_{\Lambda_{n-1}}^{\Lambda_n}\chi_{y,\psi}(t)\:dt
=\int_0^{\Lambda_N}\chi_{y,\psi}(t)\:dt\\
&=\int_0^{\Psi_M}\chi_{y,\psi}(t)\:dt
=\sum_{m=1}^M \psi_my_m
=\sum_{m=1}^M \psi_mx_m.
}
Binding all properties above we get
\Eq{*}{
\sum_{m=1}^M \psi_m \Mm_{i=1}^m (x_i,\psi_i)
&\le \sum_{m=1}^M \psi_m \Mm_{i=1}^m (y_i,\psi_i)
\le \sum_{n=1}^N \lambda_n \Mm_0^{\Lambda_{n}} \chi_{y,\psi}(t) \:dt \\
&\le \Hc\M(\lambda) \sum_{n=1}^N\lambda_n\chi_{y,\psi}(\Lambda_{n-1})= \Hc\M(\lambda) \sum_{m=1}^M \psi_mx_m.
}
Finally, as $M \in \N$ was taken arbitrarily by Lemma~\ref{lem:finite} we obtain $\Hc\M(\psi)\le \Hc\M(\lambda)$.
\end{proof}

Let us now present a simple application of this result.
\begin{cor}
 Let $\M$ be a monotone and Jensen concave
$\Q$-weighted mean on $I$ (resp. $\R$-weighted mean on $I$ which is continuous in its weights). Let 
$C\colon (0,\infty) \to [1,+\infty]$ be given by
\Eq{*}{
C(q):= \Hc\M\big(\big(q^n\big)_{n=1}^\infty\big).
}
Then $C(q^r) \le C(q)$ for all $q \in (0,\infty)$ and $r \in \N$.
\end{cor}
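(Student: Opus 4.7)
The plan is to deduce the corollary directly from the Cut theorem (Theorem~\ref{thm:cut}), combined with the nullhomogeneity of $\Hc\M$ in the weight vector. The latter follows immediately from condition (i) in the definition of a weighted mean: if we replace $\lambda$ by $c\lambda$ with $c>0$, both sides of \eq{E:WHI} scale by the same factor, so $\Hc\M(c\lambda)=\Hc\M(\lambda)$. This freedom will let us rescale $(q^n)_{n=1}^\infty$ to a vector whose partial sums match those of $(q^{rk})_{k=1}^\infty$.

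The case $q=1$ is trivial since $C(1^r)=C(1)$. For $q\neq 1$, I would set
\Eq{*}{
c:=\frac{q^{r-1}(q-1)}{q^r-1}>0,
}
and define $\lambda_n:=c\,q^n$. A short geometric-series computation shows that, with $n_k:=rk$ and $n_0:=0$,
\Eq{*}{
\sum_{n=n_{k-1}+1}^{n_k}\lambda_n=c\sum_{j=1}^{r} q^{r(k-1)+j}
=c\cdot q^{r(k-1)+1}\cdot\frac{q^r-1}{q-1}=q^{rk},
}
which is exactly the $k$-th entry of $(q^{rk})_{k=1}^\infty$. Hence $(q^{rk})_{k=1}^\infty\prec\lambda$.

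Applying Theorem~\ref{thm:cut} to this partition relation yields
\Eq{*}{
C(q^r)=\Hc\M\big((q^{rk})_{k=1}^\infty\big)\le \Hc\M(\lambda)=\Hc\M\big(c\cdot(q^n)_{n=1}^\infty\big)=\Hc\M\big((q^n)_{n=1}^\infty\big)=C(q),
}
where the penultimate equality uses nullhomogeneity in weights. I do not expect any serious obstacle here: the only point requiring a bit of care is the sign of the scaling factor $c$ (positive for both $q<1$ and $q>1$) and the verification that the hypotheses of the cut theorem apply, both of which are transparent. In the $\Q$-weighted setting one restricts to $q\in\Q$ so that $\lambda\in W^0(\Q)$; in the $\R$-weighted setting no such restriction is needed.
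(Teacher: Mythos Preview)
Your proof is correct and follows essentially the same approach as the paper's: both verify $(q^{rk})_{k\ge1}\prec c\cdot(q^n)_{n\ge1}$ for a suitable positive constant $c$ via the block decomposition $n_k=rk$, and then apply the Cut theorem together with nullhomogeneity of $\Hc\M$ in the weights. The only cosmetic difference is that the paper rescales the coarse vector $\psi$ while you rescale the fine vector $\lambda$; the arguments are otherwise identical.
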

\begin{proof}
For $q=1$ this statement is trivial. For $q \in (0,\infty) \setminus \{1\}$ define two vectors 
\Eq{*}{
\lambda:=\big(q^n\big)_{n=1}^\infty
\quad \text{ and }\quad \psi=\Big(\dfrac{1-q^r}{1-q} q^{rk}\Big)_{k=1}^\infty\:.
}
First we prove that $\psi \prec \lambda$. Indeed, for a sequence $(n_k)_{k=1}^\infty=(r \cdot k)_{k=1}^\infty$ we have
\Eq{*}{
\sum_{n=n_k}^{n_{k+1}-1} \lambda_n=\sum_{n=rk}^{r(k+1)-1} q^n
=q^{rk} \sum_{n=0}^{r-1} q^n=q^{rk} \frac{1-q^r}{1-q}=\psi_k.
}
 Therefore, by nullhomogeneity in weights and Theorem~\ref{thm:cut} we have 
 \Eq{*}{
C(q^r)=\Hc\M\big(\big(q^{rk}\big)_{k=1}^\infty\big)=\Hc\M(\psi) 
\le \Hc\M(\lambda)=C(q),
 }
which is the statement.
 \end{proof}

\subsection{Lower semicontinuouity} Next results show that for every mean a Hardy constant is lower semicontinuous as a function of weight sequence (in a pointwise topology).

\begin{defin}[Pointwise topology] Let $\lambda,\psi^{(1)},\psi^{(2)},\dots$ be elements in $\R^\N$. We say that the sequence $(\psi^{(k)})_{k=1}^\infty$ converges to $\lambda$ in \emph{pointwise topology} if $\lim_{k \to \infty} \psi_n^{(k)}=\lambda_n$ for all $n \in \N$. We denote it brifely by $\psi^{(k)} \pto \lambda$.
\end{defin}
Whenever the sequence $\lambda$ contains only positive terms we can rewrite this in an equvalient form: for all $\theta<1$ and $N \in \N$ there exists $k_0\in \N$ such that
\Eq{*}{
\abs{\frac{\psi^{(k)}_n}{\lambda_n}}\in (\theta,\theta^{-1}) \text{ for all }n \in \{1,\dots,N\}\text{ and }k \ge k_0.
}

Main result of this section states as follows

\begin{thm}\label{thm:semicontinuous}
For every weighted mean $\M$ which is continuous it its weights the mapping $\Hc\M$ is lower semicontinuous in the pointwise topology.
\end{thm}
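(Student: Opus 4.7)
The plan is to show that for every real number $C < \Hc\M(\lambda)$ one has $\Hc\M(\psi^{(k)}) > C$ for all $k$ sufficiently large; letting $C$ approach $\Hc\M(\lambda)$ from below (or run through arbitrarily large values when $\Hc\M(\lambda) = +\infty$) then yields $\liminf_{k\to\infty}\Hc\M(\psi^{(k)}) \ge \Hc\M(\lambda)$, which is the definition of lower semicontinuity in the pointwise topology.

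The first step would be to invoke Lemma~\ref{lem:finite}, which characterizes $\Hc\M(\lambda)$ as the smallest constant validating the \emph{finite}-sum inequality \eq{E:finite}. Since $C < \Hc\M(\lambda)$, this characterization forces the existence of some $N \in \N$ and $x \in I^N$ for which
\Eq{*}{
\sum_{n=1}^{N}\lambda_n \Mm_{i=1}^n(x_i,\lambda_i) > C \sum_{n=1}^{N} \lambda_n x_n.
}
The point of this reduction is to compress the witness of $C<\Hc\M(\lambda)$ into a finite coordinate window, so that the tail of the weight sequence plays no further role.

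Second, I would fix this $N$ and $x$ and combine the two remaining ingredients: continuity of $\M$ in its weights, and the fact that pointwise convergence $\psi^{(k)}\pto\lambda$ gives $\psi^{(k)}_n \to \lambda_n$ for each of the finitely many indices $n\in \{1,\dots,N\}$. Together these imply $\Mm_{i=1}^n(x_i,\psi^{(k)}_i) \to \Mm_{i=1}^n(x_i,\lambda_i)$ for every such $n$, so both sides of the $\psi^{(k)}$-analogue of the displayed inequality converge to the corresponding sides at $\lambda$. By persistence of strict inequality under limits, there is a $k_0$ with
\Eq{*}{
\sum_{n=1}^{N}\psi^{(k)}_n \Mm_{i=1}^n(x_i,\psi^{(k)}_i) > C \sum_{n=1}^{N} \psi^{(k)}_n x_n \quad \text{for all } k \ge k_0,
}
and another appeal to Lemma~\ref{lem:finite} (in its easy direction: if \eq{E:finite} fails for some $N,x$ at constant $C$, then necessarily $\Hc\M(\psi^{(k)}) > C$) completes the step.

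I do not anticipate any substantive obstacle: once Lemma~\ref{lem:finite} truncates the bad-case witness into a finite window, continuity of $\M$ in its weights does all the work. The only subtlety worth flagging is that the case $\Hc\M(\lambda) = +\infty$ needs no separate treatment --- running $C$ through arbitrarily large real values in the argument above yields $\Hc\M(\psi^{(k)}) \to +\infty$ automatically. Without the finite-window reduction from Lemma~\ref{lem:finite} the proof would fail, since pointwise convergence of weights gives no control over infinite tails of the series.
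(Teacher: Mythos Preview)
Your argument is correct and takes a cleaner route than the paper's. The paper does \emph{not} invoke Lemma~\ref{lem:finite}; instead it fixes $\theta\in(0,1)$, chooses an infinite witness $x\in\ell^1(\lambda)$ with $\sum_{n=1}^\infty \lambda_n \Mm_{i=1}^n(x_i,\lambda_i) > \theta\,\Hc\M(\lambda)\sum_{n=1}^\infty \lambda_n x_n$, then truncates the left-hand series at some $N_\theta$ (losing a second factor of $\theta$), and finally uses continuity in the first $N_\theta$ weights (a third factor of $\theta$) to conclude $\Hc\M(\psi^{(k)})>\theta^3\Hc\M(\lambda)$. Your appeal to Lemma~\ref{lem:finite} compresses all of this: the finite witness $(N,x)$ is produced in one stroke, both sides of the strict inequality are finite continuous functions of $(\psi_1,\dots,\psi_N)$, and no $\theta$-bookkeeping is needed. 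A further dividend is that in your version the right-hand side $\sum_{n=1}^N \psi_n^{(k)} x_n$ is automatically in the correct variable, whereas the paper's displayed chain terminates with $\theta^3\Hc\M(\lambda)\sum_i \lambda_i x_i$, and the passage from this to $\Hc\M(\psi^{(k)})>\theta^3\Hc\M(\lambda)$ (which by definition requires comparison against $\sum_n \psi_n^{(k)} x_n$, not $\sum_n \lambda_n x_n$) is left implicit there; your finite-sum formulation sidesteps this point entirely.
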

\begin{proof}
Take a sequence $(\psi^{(k)})_{k=1}^\infty$ of elements in $W^0(R)$ which is convergent to $\lambda \in W^0(R)$ in the pointwise topology.
We prove that 
\Eq{E:LS}{
\liminf_{k \to \infty} \Hc\M\big(\psi^{(k)}\big) \ge \Hc\M(\lambda).} 

Fix $\theta\in (0,1)$. There exists a sequence $x \in \ell^1(\lambda)$ such that
 \Eq{*}{
 \sum_{n=1}^{\infty} \lambda_n \Mm_{i=1}^n (x_i,\lambda_i) > \theta \Hc\M(\lambda) \sum_{n=1}^{\infty} \lambda_n x_n 
 }
As the series on the left hand side is convergent we can take $N_\theta \in \N$ with
\Eq{*}{
 \sum_{n=1}^{N_\theta} \lambda_n \Mm_{i=1}^n (x_i,\lambda_i) &\ge \theta \sum_{n=1}^{\infty} \lambda_n \Mm_{i=1}^n (x_i,\lambda_i).
 }
Once $N_\theta$ is fixed, $\psi^{(k)} \pto \lambda$, and $\M$ is continuous in its weights, there exists $k_\theta\in \N$ such that
\Eq{*}{
\psi^{(k)}_n \Mm_{i=1}^n (x_i,\psi^{(k)}_i) \ge \theta\lambda_n \Mm_{i=1}^n (x_i,\lambda_i) \text{ for all }n \le N_\theta \text{ and }k \ge k_\theta.
}
Then for all $k \ge k_\theta$ we have
\Eq{*}{
\sum_{n=1}^\infty \psi^{(k)}_n \Mm_{i=1}^n (x_i,\psi^{(k)}_i)
&>
\sum_{n=1}^{N_\theta} \psi^{(k)}_n \Mm_{i=1}^n (x_i,\psi^{(k)}_i)
\ge \theta  \sum_{n=1}^{N_\theta} \lambda_n \Mm_{i=1}^n (x_i,\lambda_i)\\
&\ge \theta^2 \sum_{n=1}^{\infty} \lambda_n \Mm_{i=1}^n (x_i,\lambda_i)
> \theta^3 \Hc\M(\lambda) \sum_{k=1}^{\infty} \lambda_i x_i.
}
Thus $\Hc\M\big(\psi^{(k)}\big) > \theta^3 \Hc\M(\lambda)$ for all $k \ge k_\theta$. As $\theta \in (0,1)$ was taken arbitrarily we obtain the inequality \eq{E:LS}.
\end{proof}

Let us now show that the mapping mentioned in the theorem above is not necessarily  continuous.
\begin{exa}\label{ex:AR}
Define $\lambda,\psi^{(1)},\psi^{(2)},\dots \in W^0(\R)$ by
\Eq{*}{
\lambda_i &=\tfrac{1}{2^i} \quad(i \in \N),\\
\psi^{(k)}_i &=\begin{cases} \frac{1}{2^i} & i \in \N \setminus \{k\} \\
1 & i=k \end{cases} \qquad  \text{ for all }k \in \N.
}
Obviously $\psi^{(k)} \pto \lambda$ and $\Lambda_n=1-\tfrac1{2^n}$ for all $n \in\N$. 
Then Proposition~\ref{prop:Aryth} implies 
\Eq{*}{
\Hc\A(\lambda)=\sum_{m=1}^\infty \frac{\lambda_m}{\Lambda_m}=\sum_{m=1}^\infty \frac{1}{2^m-1}=:E,
}
where $E\approx1.606$ is so-called Erd\"os–Borwein constant.
On the other hand, for all $k\in \N$, 
\Eq{*}{
\Hc\A\big(\psi^{(k)}\big)=\sum_{m=1}^\infty \frac{\psi^{(k)}_m}{\Psi^{(k)}_m} = \sum_{m=1}^{k-1} \frac{\psi^{(k)}_m}{\Psi^{(k)}_m}+\frac{\psi^{(k)}_k}{\Psi^{(k)}_k}+\sum_{m={k+1}}^{\infty} \frac{\psi^{(k)}_m}{\Psi^{(k)}_m}.
}
If we take the limit as $k \to \infty$ we get 
\Eq{*}{
\lim_{k\to \infty}\sum_{m=1}^{k-1} \frac{\psi^{(k)}_m}{\Psi^{(k)}_m}&=\lim_{k\to \infty}\sum_{m=1}^{k-1} \frac{\lambda_m}{\Lambda_m}=E,\\
\lim_{k\to \infty}\frac{\psi^{(k)}_k}{\Psi^{(k)}_k}&=
\lim_{k\to \infty}\frac{1}{\Lambda_{k-1}+1}=
\lim_{k\to \infty}\frac{1}{2-\frac1{2^{k-1}}}=\frac12,\\
(0\le)\:\: \lim_{k\to \infty}\sum_{m={k+1}}^{\infty} \frac{\psi^{(k)}_m}{\Psi^{(k)}_m}&\le\lim_{k\to \infty}\sum_{m={k+1}}^{\infty} \frac{1}{2^{k+1}}=0.
}
Thus 
\Eq{*}{
\lim_{k \to \infty} \Hc\A\big(\psi^{(k)}\big)=E+\tfrac{1}{2}>\Hc\A(\lambda).
}
This shows that inequality \eq{E:LS} can be strict and consequently that the mapping $\Hc{\M}$ is not continuous (for $\M=\A$).
\end{exa}

Let us now present an important application of Theorem~\ref{thm:semicontinuous}.

\begin{cor}\label{cor:1Continuous}
Let $\M$ be a symmetric and monotone $R$-weighted mean on $I$ which is continuous in its weights. For every sequence $(\psi^{(k)})$ of elements in $W^0(R)$ with $\psi^{(k)} \pto \vone$ we have
$\Hc\M\big(\psi^{(k)}\big) \to \Hc[\vone]{\M}$.
In particular
\Eq{*}{
\lim_{s \to 1}\Hc{\M}\big((s^n)_{n=1}^\infty\big) = \Hc[\vone]{\M}.
}
\end{cor}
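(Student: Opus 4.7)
The key observation is that the proof splits cleanly into two halves, corresponding to the two inequalities needed to establish the limit. The plan is to combine lower semicontinuity (Theorem~\ref{thm:semicontinuous}) with the maximality of $\Hc\M(\vone)$ (Theorem~\ref{thm:mu1}) to squeeze the sequence $\Hc\M(\psi^{(k)})$ to $\Hc\M(\vone)$.

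First I would deal with the upper bound. Since $\M$ is symmetric and monotone, Theorem~\ref{thm:mu1} yields $\sup \Hc\M = \Hc\M(\vone)$, and in particular $\Hc\M(\psi^{(k)}) \le \Hc\M(\vone)$ for every $k \in \N$. Therefore
\Eq{*}{
\limsup_{k \to \infty} \Hc\M\big(\psi^{(k)}\big) \le \Hc\M(\vone).
}
This step is purely formal and requires only the hypothesis that $\M$ is symmetric and monotone (which is used precisely to invoke Theorem~\ref{thm:mu1}).

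For the lower bound I would apply Theorem~\ref{thm:semicontinuous} directly: the continuity of $\M$ in its weights gives
\Eq{*}{
\liminf_{k \to \infty} \Hc\M\big(\psi^{(k)}\big) \ge \Hc\M(\vone).
}
Combining the two inequalities, the limit exists and equals $\Hc\M(\vone)$, which proves the main assertion.

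Finally, for the ``in particular'' part, I would verify that $(s^n)_{n=1}^\infty \pto \vone$ as $s \to 1$. Indeed, for every fixed $n \in \N$ the map $s \mapsto s^n$ is continuous at $1$ with value $1$, so taking any sequence $s_k \to 1$ in $(0,\infty)$ we get $(s_k^n)_{n=1}^\infty \pto \vone$ in the pointwise topology. Applying the first part to $\psi^{(k)}=(s_k^n)_{n=1}^\infty$ and using that the sequence $s_k \to 1$ was arbitrary then yields $\lim_{s \to 1}\Hc{\M}\big((s^n)_{n=1}^\infty\big) = \Hc[\vone]{\M}$. No genuine obstacle is expected; the whole argument is a direct packaging of the two earlier theorems, with Example~\ref{ex:AR} serving as a reminder that without the symmetry/monotonicity hypothesis, the upper bound half can fail.
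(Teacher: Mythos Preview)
Your proof is correct and matches the paper's approach exactly: the paper simply states that the corollary is ``straightforward in view of Theorems~\ref{thm:mu1} and \ref{thm:semicontinuous},'' which is precisely your squeeze between the $\limsup$ bound from Theorem~\ref{thm:mu1} and the $\liminf$ bound from Theorem~\ref{thm:semicontinuous}. One minor inaccuracy in your closing remark: Example~\ref{ex:AR} concerns the arithmetic mean, which \emph{is} symmetric and monotone; it illustrates that $\Hc\M$ can fail to be continuous at a limit point $\lambda\neq\vone$, not that the upper bound half fails without symmetry/monotonicity.
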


Its proof is straightforward in view of Theorems~\ref{thm:mu1} and \ref{thm:semicontinuous}. 
This statement is related to \cite[Theorem 5.5]{PalPas20}, where such Hardy constants were obtained for a concave quasideviation means (under some additional assumptions). 

Let us conlude this section with a natural open problem. It was shown in Theorem~\ref{thm:semicontinuous} that $\Hc\M$ is lower semicontinuous. By Example~\ref{ex:AR}, we know that it is not continuous in a case $\M=\A$. However we suppose that it is the case for Hardy ($\vone$-Hardy) means.
\begin{conj}
 Let $\M$ be a symmetric and monotone $R$-weighted mean on $I$  which is continuous in its weights. If $\M$ is a $\vone$-Hardy mean (equivalently it is a $\lambda$-Hardy mean for all $\lambda \in W^0(R)$) then $\Hc\M$ is continuous in the pointwise topology. 
\end{conj}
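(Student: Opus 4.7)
Because Theorem~\ref{thm:semicontinuous} already supplies lower semicontinuity, the conjecture reduces to the bound $\limsup_{k\to\infty}\Hc\M(\psi^{(k)})\le\Hc\M(\lambda)$ whenever $\psi^{(k)}\pto\lambda$. The Hardy hypothesis enters through Theorem~\ref{thm:mu1}: it yields the uniform bound $\Hc\M(\psi^{(k)})\le\Hc\M(\vone)=:H^\ast<\infty$, which is what makes it meaningful even to ask for upper semicontinuity.

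My plan is, for each $k$ and each fixed $\varepsilon>0$, to record a finite witness via Lemma~\ref{lem:finite}: an integer $N_k\in\N$ and a vector $x^{(k)}\in I^{N_k}$ normalised by $\sum_{n=1}^{N_k}\psi^{(k)}_n x^{(k)}_n=1$ with
$$
\sum_{n=1}^{N_k}\psi^{(k)}_n\Mm_{i=1}^n\bigl(x^{(k)}_i,\psi^{(k)}_i\bigr)\ge\Hc\M(\psi^{(k)})-\varepsilon.
$$
The normalisation forces the entrywise bound $0\le x^{(k)}_n\le 1/\psi^{(k)}_n\to 1/\lambda_n$, so a Helly-type diagonal extraction produces a subsequence on which every coordinate $x^{(k)}_n$ converges to some limit $x_n$. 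If the cut-off lengths $N_k$ can be chosen uniformly bounded, say $N_k\le N(\varepsilon)$, then continuity of $\M$ in its weights -- combined with a small smoothing of the limit vector $x$ to sidestep any potential entry-discontinuities of $\M$ -- transports the witness to $\lambda$ and yields $\Hc\M(\lambda)\ge\Hc\M(\psi^{(k)})-2\varepsilon$ along the subsequence. Letting $\varepsilon\to 0$ closes the argument.

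The decisive step -- and the reason the statement is a conjecture -- is the \emph{uniform tail estimate}
$$
\sup_{\psi\in W^0(R)}\ \sup_{\|x\|_{\ell^1(\psi)}=1}\ \sum_{n>N}\psi_n\Mm_{i=1}^n(x_i,\psi_i)\xrightarrow[N\to\infty]{}0,
$$
which would validate the uniform truncation above. Example~\ref{ex:AR} shows that without the Hardy assumption such an estimate must fail: concentrating a single large weight at a late coordinate produces a non-vanishing contribution to the Hardy sum. A plausible route is to compare $\M$ with a power mean $\P_p$ for some $p<1$ through a Kedlaya-type convexity argument in the spirit of Lemma~\ref{lem:1gamma} and the techniques of \cite{PalPas16,PalPas18b}, then to pull the known tail decay for $\P_p$ back to $\M$. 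Obtaining such a comparison purely from the abstract axioms of a weighted mean, rather than inside a specific parametric family, is where the genuine difficulty lies.
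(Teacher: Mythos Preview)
The statement you are attempting is a \emph{conjecture} in the paper: it is posed as an open problem immediately after Theorem~\ref{thm:semicontinuous} and Example~\ref{ex:AR}, and the paper offers no proof. So there is nothing to compare against on the paper's side.

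As for your outline, you yourself identify correctly that it is incomplete and you locate the gap honestly. The reduction to upper semicontinuity via Theorem~\ref{thm:semicontinuous}, the uniform ceiling $\Hc\M(\psi^{(k)})\le H^\ast$ via Theorem~\ref{thm:mu1}, and the use of Lemma~\ref{lem:finite} to extract finite witnesses are all sound. The diagonal extraction is fine coordinatewise. What is missing, as you say, is any control on the lengths $N_k$ (equivalently, the uniform tail estimate you display); without it the limit vector $x$ need not lie in $\ell^1(\lambda)$ and the passage to the limit collapses. One further caution about your proposed route to close the gap: the Kedlaya-type machinery used in Lemma~\ref{lem:1gamma} and in \cite{PalPas16,PalPas18b} relies on Jensen-concavity of $\M$, which is \emph{not} assumed in the conjecture (only symmetry, monotonicity, continuity in weights, and the $\vone$-Hardy property). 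So a comparison with a power mean $\P_p$ along those lines would require either an additional hypothesis or a genuinely new argument. A minor point: the normalisation $\sum_n\psi^{(k)}_n x^{(k)}_n=1$ may be impossible if $I$ is bounded; you would need to normalise to a value in the appropriate range, but this is cosmetic.
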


\section{Applications}

We now aim to present few implications of the latter results.

\subsection{Gini means}
First we recall a clasical notion of Gini means in a weighted setting \cite{Gin38}. For $p,q \in \R$ the Gini mean $\G_{p,q}$ is the $\R$-weighted mean on $\R_+$ defined by
Another important generalization of power means was introduced in the paper \cite{Gin38}. For two real parameters $p,q$, the Gini mean $\G_{p,q}\colon \bigcup_{n=1}^{\infty} \R_+^n \times W_n(\R) \to \R_+$ is defined by
\Eq{*}{
\G_{p,q} (x,\lambda):= 
\begin{cases} 
\left(\dfrac{\lambda_1x_1^p+\cdots+\lambda_nx_n^p}{\lambda_1x_1^q+\cdots+\lambda_nx_n^q} \right)^{\frac{1}{p-q}} &\quad \text{ if } p \ne q, \\[4mm]                                                         
\exp\left(\dfrac{\lambda_1x_1^p\log x_1+\cdots+\lambda_nx_n^p\log x_n}{\lambda_1x_1^p+\cdots+\lambda_nx_n^p} \right) &\quad \text{ if } p = q.
\end{cases}
}

In a sequence papers, further generalizations were obtained: \emph{Bajraktarevi\'c means} \cite{Baj58}, \emph{deviation (or Dar\'oczy) means} \cite{Dar71b} and \emph{quasideviation means} \cite{Pal82a}. For more details, we just refer the reader to a series of papers by Losonczi \cite{Los70a,Los71a,Los71b,Los71c,Los73a,Los77} (for Bajraktarevi\'c means), Dar\'oczy \cite{Dar71b,Dar72b}, Dar\'oczy--Losonczi \cite{DarLos70}, Dar\'oczy--P\'ales \cite{DarPal82,DarPal83} (for deviation means), P\'ales \cite{Pal82a,Pal83b,Pal84a,Pal85a,Pal88a,Pal88d,Pal88e} (for deviation and quasideviation means) and P\'ales--Pasteczka \cite{PalPas19b} (for semideviation means).

Clearly, in the particular case $q=0$, the mean $\G_{p,q}$ reduces to the $p$th power mean $\P_p$. It is also obvious that $\G_{p,q}=\G_{q,p}$.
It is known \cite{Los71a,Los71c} that $\G_{p,q}$ is monotone if and only if $pq\le0$ and  concave if and only if 
\Eq{pq}{
\min(p,q)\leq0\leq\max(p,q)\leq 1.
}
Furthermore the following properties involving Hardy inequality are valid.
\begin{lem}[\cite{PalPas20}, Proposition~5.2]
\label{lem:Gini}
Let $(\lambda_n)\in W_0$ such that $\Lambda_n\to\infty$ and $\big(\tfrac{\lambda_n}{\Lambda_n}\big)_{n=1}^{\infty}$ is nonincreasing with a limit $\eta\in[0,1)$. Let $p,\,q \in \R$ satisfying \eq{pq}. Then 
\Eq{*}{
\Hc[\lambda]{\G_{p,q}}=
\begin{cases}
\bigg(\dfrac{1-(1-\eta)^{1-q}}{1-(1-\eta)^{1-p}}\bigg)^{\tfrac{1}{p-q}}, &\quad \eta \in (0,1) \text{ and } p\ne q;\\[1mm]
\bigg(\dfrac{1-q}{1-p}\bigg)^{\tfrac{1}{p-q}}, &\quad \eta = 0 \text{ and } p\ne q; \\[4mm]
(1-\eta)^{1-1/\eta}, &\quad \eta \in (0,1) \text{ and } p=q= 0;\\[1mm]
e, &\quad \eta = 0 \text{ and } p=q=0.
\end{cases}
}
\end{lem}

Now we show that the monotonicity assumption of the  ratio sequence $\big(\tfrac{\lambda_n}{\Lambda_n}\big)_{n=1}^{\infty}$ can be ommited in the case $\eta=0$.
\begin{prop}\label{prop:G0}
Let $(\lambda_n)\in W_0$ such that $\Lambda_n\to\infty$ and $\tfrac{\lambda_n}{\Lambda_n}\to 0$. Let $p,\,q \in \R$, $\min(p,q)\le 0 \le \max(p,q)<1$. Then 
\Eq{E:Gpqmu0}{
\Hc[\lambda]{\G_{p,q}}=
\begin{cases}
\bigg(\dfrac{1-q}{1-p}\bigg)^{\tfrac{1}{p-q}}, &\quad p\ne q; \\[4mm]
e, &\quad p=q=0.
\end{cases}
} 
\end{prop}
\begin{proof}
 As $\G_{p,q}$ is monotone by Theorem~\ref{thm:main} we have $\Hc[\lambda] {\G_{p,q}}=\Hc[\vone] {\G_{p,q}}$. Now, using 
 Lemma~\ref{lem:Gini} to a vector $\lambda=\vone$ we obtain desired equality \eq{E:Gpqmu0}.
\end{proof}

\begin{rem}
 We can analogously relax such a monotonicity assumption for all concave quasideviation means -- see \cite{PalPas19a,PalPas20} for detailed study of their Hardy property.
\end{rem}

Now we establish completely new Hardy-type inequality related to Gini means

\begin{prop}\label{prop:pqtau}
For all $p,q \in \R$ satisfying \eq{pq}, $\tau \in \R$, and $x \in \ell_1(\lambda)$ we have 
\Eq{tau}{
 \sum_{n=1}^\infty \lambda_n x_n^\tau \Big(\G_{(1-\tau)p+\tau,(1-\tau)q+\tau} \big((x_1,\dots,x_n),(\lambda_1,\dots,\lambda_n)\big)\Big)^{1-\tau}
 \le \Hc{\G_{p,q}}(\vone) \sum_{n=1}^\infty \lambda_n x_n.
 } 
\end{prop}

\begin{proof}
 Fix $p,\,q \in \R$ satisfying \eq{pq} and $d \in \R$. Applying Proposition~\ref{prop:G0} to a sequence of weights with $\lambda_n$ replaced by $\mu_n:=\lambda_ny_n^d$ we obtain
 \Eq{*}{
 \sum_{n=1}^\infty \lambda_n y_n^d \G_{p,q} \big((y_1,\dots,y_n),(\lambda_1y_1^d,\dots,\lambda_ny_n^d)\big)
 \le \Hc{\G_{p,q}}(\vone) \sum_{n=1}^\infty \lambda_n y_n^{d+1}
 }
However by the definition of Gini mean we easily get 
 \Eq{*}{
\G_{p,q} \big((y_1,\dots,y_n),(\lambda_1y_1^d,\dots,\lambda_ny_n^d)\big)=
\Big(\G_{\frac{p+d}{d+1},\frac{q+d}{d+1}} \big((y_1^{d+1},\dots,y_n^{d+1}),(\lambda_1,\dots,\lambda_n)\big)\Big)^{\frac{1}{d+1}}.
 }
Binding above results and putting $x_n:=y_n^{d+1}$ we get 
 \Eq{*}{
 \sum_{n=1}^\infty \lambda_n x_n^\frac{d}{d+1} \Big(\G_{\frac{p+d}{d+1},\frac{q+d}{d+1}} \big((x_1,\dots,x_n),(\lambda_1,\dots,\lambda_n)\big)\Big)^{\frac{1}{d+1}}
 \le \Hc{\G_{p,q}}(\vone) \sum_{n=1}^\infty \lambda_n x_n.
 }
 Now we can put $d=\frac{\tau}{1-\tau}$ 
 to obtain desired inequality \eq{tau}.
 \end{proof}

 We can now use above resuly to give some majorization of the Hardy constant for Gini means in a rectangle $(-\infty,0)^2$.
 \begin{thm}\label{thm:Ginineg}
Let $p,q \in (-\infty,0)$ with $p\ge q$. Then
\Eq{Gneg}{
\Hc[\lambda]{\G_{p,q}}\le \frac{C\big(\frac{q-p}{1-p}\big)-p}{1-p}\qquad \text{ for all }\lambda \in W^0(\R),
}
where $C(p)$ was defined in \eq{E:C(p)}.
 \end{thm}
\begin{proof}
 Let $q_0:=\frac{q-p}{1-p} \in (-\infty,0)$. Fix $\lambda\in W^0(\R)$ and $x\in \ell_1(\lambda)$.
 Applying Proposition~\ref{prop:pqtau} with $(p,q,\tau)\leftarrow (0,\frac{q-p}{1-p},p)$ inequality \eq{tau} becomes
 \Eq{*}{
  \sum_{n=1}^\infty \lambda_n x_n^p \Big(\G_{p,(1-p)\frac{q-p}{1-p}+p} \big((x_1,\dots,x_n),(\lambda_1,\dots,\lambda_n)\big)\Big)^{1-p}
 \le \Hc{\G_{0,\frac{q-p}{1-p}}}(\vone) \sum_{n=1}^\infty \lambda_n x_n.
 }
 Thus, by the identities $(1-p)\frac{q-p}{1-p}+p=q$, $\G_{0,\frac{q-p}{1-p}}=\P_{\frac{q-p}{1-p}}$, and \eq{E:C(p)0} we get
 \Eq{*}{
  \sum_{n=1}^\infty \lambda_n x_n^p \Big(\G_{p,q} \big((x_1,\dots,x_n),(\lambda_1,\dots,\lambda_n)\big)\Big)^{1-p}
 \le C\big(\tfrac{q-p}{1-p}\big) \sum_{n=1}^\infty \lambda_n x_n.
 }
Applying the inverse Cauchy inequality $pu+(1-p)v \le u^p v^{1-p}$ which is valid for all $u,v \in(0,+\infty)$ and $p<0$ we obtain
 \Eq{*}{
  p \sum_{n=1}^\infty \lambda_n x_n + (1-p)\sum_{n=1}^\infty \lambda_n \G_{p,q} \big((x_1,\dots,x_n),(\lambda_1,\dots,\lambda_n)\big)
 \le C\big(\tfrac{q-p}{1-p}\big) \sum_{n=1}^\infty \lambda_n x_n,
 }
 which is trivially equivalent to
 \Eq{*}{
 \sum_{n=1}^\infty \lambda_n \G_{p,q} \big((x_1,\dots,x_n),(\lambda_1,\dots,\lambda_n)\big)\le \frac{C\big(\tfrac{q-p}{1-p}\big)-p}{1-p}\sum_{n=1}^\infty \lambda_n x_n.
 }
 The latter statement implies \eq{Gneg}
\end{proof}

%
%

\begin{rem}
Using some homogenization techniques described in \cite{PalPas19a} one can find anouther estmation for the Hardy constant of Gimi means in a case $p,q \in (-\infty,0)$ with $p\ne q$. More precisely one we can show that
$\Hc[\lambda]{\G_{p,q}}\le \xi_{p,q}$, where $\xi_{p,q}$ is the unique solution $\xi\in (1,e)$ of the equation
\Eq{*}{
\frac{p}{p-1}\Big(\frac qp\Big)^{\frac{p-1}{p-q}}-\frac q{q-1}\Big(\frac qp\Big)^{\frac{q-1}{p-q}}=\frac1{1-q}\xi^{1-q}-\frac1{1-p}\xi^{1-p}.
}
This result has also the relevant limit counterpart when $p =q$. Approximate calculations motivates us to conjecture that this estimate is better than the one given in Theorem~\ref{thm:Ginineg}.
\end{rem}

\subsection{Repeted sequences} For $\lambda \in W^0(R)$ and $k \in \N$ define $\lambda^{\odot n} \in W^0(R)$ by
\Eq{*}{
(\lambda^{\odot n})_q=\lambda_{\ceil{q/n}} \qquad \text{ for all }q \in\N.
}
Intuitively $\lambda^{\odot n}$ is obtained by repeating $n$-times each element of $\lambda$. Such notation is motivated by the identity $\lambda \odot \lambda = \lambda^{\odot 2}$ with the two-parameter operator $\odot$ which already appeared (for finite vectors) in Definition~\ref{def:WQA}. Obviously $\lambda^{\odot n}=\lambda$ precisely if $n=1$ or $\lambda$ is a constant vector. 
We show the mapping $n \mapsto \Hc{\M}(\lambda^{\odot n})$ is monotone with the division (partial) ordering for a vast family of means.


\begin{prop}
Let $\M$ be a monotone and Jensen concave
$\Q$-weighted mean on $I$ (resp. $\R$-weighted mean on $I$ which is continuous in its weights) and $\lambda\in W^0(\Q)$ (resp. $\lambda\in W^0(\R)$). Then for all $m, n \in \N$ with $\tfrac nm\in\N$ we have $\Hc\M(\lambda^{\odot m}) \le \Hc\M(\lambda^{\odot n})$. Moreover 
\Eq{YYDF}{
\lim_{n \to \infty} \Hc\M(\lambda^{\odot n})=\Hc\M(\vone).
}
\end{prop}
\begin{proof}
 Let $k:=\frac{n}{m}$. Then we have 
 \Eq{*}{
 \sum_{i=1}^k (\lambda^{\odot n})_{ks+i}= \sum_{i=1}^k \lambda_{\ceil{\frac{ks+i}n}}=\sum_{i=1}^k \lambda_{\ceil{\frac{ks+i}{km}}}
\text{ for all }s \in \N. }
 Now observe that $\tfrac{ks+i}{km} \notin \N$ for $i\in\{1,\dots,k-1\}$. Thus 
$\ceil{\tfrac{ks+i}{km}}=\ceil{\tfrac{ks+k}{km}}=\ceil{\tfrac{s+1}{m}}$ for all $i\in\{1,\dots,k\}$ and, consequently,
\Eq{*}{
\sum_{i=1}^k (\lambda^{\odot n})_{ks+i}=k\lambda_{\ceil{\tfrac{s+1}{m}}}=k (\lambda^{\odot m})_{s}.
}
Therefore $(k\lambda^{\odot m})\prec (\lambda^{\odot n})$. Now by Theorem~\ref{thm:cut} and nullhomogeneity in the weights we obtain 
$\Hc\M(\lambda^{\odot m}) \le \Hc\M(k\lambda^{\odot n})=\Hc\M(\lambda^{\odot n})$. To show the moreover part observe that $(\lambda^{\odot n})\pto \lambda_1\vone$ and therefore by  nullhomogeneity in weights and Corollary~\ref{cor:1Continuous} we obtain the equality  
$\lim\limits_{n \to \infty} \Hc\M(\lambda^{\odot n})=\Hc\M(\lambda_1\vone)=\Hc\M(\vone)$, which is \eq{YYDF}.
\end{proof}

\section{\label{sec:1gamma} Proof of Lemma~\ref{lem:1gamma}}

In this section we reuse the concepts which were contained in the proof of \cite[Lemma~5.2]{PalPas19b} (see Lemma~\ref{lem:L5.2} above). 

First observe that if $\Hc\M(\lambda)=+\infty$ then \eq{E:1gamma} is trivially valid. From now on assume that $\Hc\M(\lambda)<+\infty$. In order to make the proofs more compact, define $\Psi_m:=\psi_1+\cdots+\psi_m$ for $m \in\{1,\dots,M\}$ and $\Lambda_n:=\lambda_1+\dots+\lambda_n$ for $n\in\N$. In view of the nullhomogeneity of $\M$, we may assume that $\Psi_M=1$. For each $j\in\N$ define the $R^*$-simple function $f_j \colon [0,1) \to I$ by
\Eq{*}{
f_j\big|_{ \mbox{$\big[\frac{\Lambda_s}{\Lambda_j},\,\frac{\Lambda_{s+1}}{\Lambda_j}\big)$}}:=\chi_{y,\psi}\big(\tfrac{\Lambda_s}{\Lambda_j}\big) \quad \text{ for all } s\in\{0,\dots,j-1\}.
}
As the sequence $y$ is nonincreasing, $\chi_{y,\psi}$ is nonincreasing, too. Therefore, for all $j \in\N$, the function $f_j$ is nonincreasing and $\chi_{y,\psi} \le f_j$. Thus, by Lemma~\ref{lem:decr}, so is the function $C_j \colon [0,1) \to I$ given by
\Eq{*}{
C_j(t):=\begin{cases}
         \hskip5mm y_1 &\mbox{ if }t=0,\\
         \inf\limits_{s\in [0, t] \cap R^*}\:\Mm\limits_0^s f_j(x) dx &\mbox{ if }t\in(0,1), 
        \end{cases} \qquad (j\in\N).
}
Fix $j \in \N$ with $j \ge 2$ arbitrarily. As $C_j$ is monotone, it is also Riemann integrable. Whence, for all $m \in\{1,\dots,M\}$ we get
\Eq{*}{
\psi_m \cdot \Mm_{i=1}^m \big(y_i,\psi_i\big) &= \psi_m \cdot \Mm_0^{\Psi_m} \chi_{y,\psi}(x) dx \le \psi_m \cdot \Mm_0^{\Psi_m} f_j(x) dx\\
&= \psi_m \cdot C_j(\Psi_m)= \int_{\Psi_{m-1}}^{\Psi_m} C_j(\Psi_m) dx\le \int_{\Psi_{m-1}}^{\Psi_m} C_j(x) dx.
}
Therefore, if we sum-up these inequalities side-by-side, we obtain
\Eq{Fin1}{
\sum_{m=1}^{M} \psi_m \cdot \Mm_{i=1}^m \big(y_i,\psi_i\big) 
&\le \int_{0}^1 C_j(x) dx.
}

We are now going to majorize the right hand side of this inequality. Observe first that
\Eq{SP1}{
\int_{0}^{\Lambda_1/\Lambda_j} C_j(x)dx \le \frac{\Lambda_1}{\Lambda_j}\cdot C_j(0) = \frac{\lambda_1y_1}{\Lambda_j}.
}
Furthermore, for all $n \in \{1,\dots,j-1\}$, as $C_j$ is nonincreasing and $\frac{\Lambda_n}{\Lambda_j} \in R^*$ we get
\Eq{SP2}{
\int_{\frac{\Lambda_n}{\Lambda_j}}^{\frac{\Lambda_{n+1}}{\Lambda_j}} C_j(x)dx 
\le \frac{\lambda_{n+1}}{\Lambda_j} C_j\Big(\frac{\Lambda_n}{\Lambda_j} \Big)
=\frac{\lambda_{n+1}}{\Lambda_j}\cdot \Mm_0^{\Lambda_n/\Lambda_j} f_j(x) dx
=\frac{\lambda_{n+1}}{\Lambda_j}\cdot \Mm_{i=1}^{n} \Big(\chi_{y,\psi} \big(\tfrac{\Lambda_{i-1}}{\Lambda_j}\big),\lambda_i\Big).
}
If we now sum up \eq{SP1} and \eq{SP2} for all $n\in\{1,\dots,j-1\}$, we obtain
\Eq{*}{
\int_{0}^1 C_j(x) dx 
&=\int_0^{\frac{\Lambda_1}{\Lambda_j}} C_j(x)dx
+\sum_{n=1}^{j-1} \int_{\frac{\Lambda_n}{\Lambda_j}}^{\frac{\Lambda_{n+1}}{\Lambda_j}} C_j(x)dx
\le \frac{\lambda_1}{\Lambda_j} y_1 + \sum_{n=1}^{j-1} \frac{\lambda_{n+1}}{\Lambda_j}\cdot \Mm_{i=1}^{n} \Big(\chi_{y,\psi} \big(\tfrac{\Lambda_{i-1}}{\Lambda_j}\big),\lambda_i\Big)\\
&=\frac{\lambda_1}{\Lambda_j} y_1 + \sum_{n=1}^{j-1} \frac{\lambda_n}{\Lambda_j}\cdot \Mm_{i=1}^{n} \Big(\chi_{y,\psi} \big(\tfrac{\Lambda_{i-1}}{\Lambda_j}\big),\lambda_i\Big)
+ \sum_{n=1}^{j-1} \frac{\lambda_{n+1}-\lambda_n}{\Lambda_j}\cdot \Mm_{i=1}^{n} \Big(\chi_{y,\psi} \big(\tfrac{\Lambda_{i-1}}{\Lambda_j}\big),\lambda_i\Big).
}
Now, by Lemma~\ref{lem:finite},
\Eq{*}{
\sum_{n=1}^{j-1} \frac{\lambda_n}{\Lambda_j}\cdot \Mm_{i=1}^{n} \Big(f_j \big(\tfrac{\Lambda_{i-1}}{\Lambda_j}\big),\lambda_i\Big) 
&\le \Hc\M(\lambda) \sum_{n=1}^{j-1}\frac{\lambda_n}{\Lambda_j}f_j \big(\tfrac{\Lambda_{n-1}}{\Lambda_j}\big)
=\Hc\M(\lambda) \sum_{n=0}^{j-2}\frac{\lambda_{n+1}}{\Lambda_j}f_j \big(\tfrac{\Lambda_n}{\Lambda_j}\big)\\
&=\Hc\M(\lambda) \Big( \frac{\lambda_1}{\Lambda_j}f_j(0)+\sum_{n=1}^{j-2}\frac{\lambda_n}{\Lambda_j}f_j \big(\tfrac{\Lambda_{n}}{\Lambda_j}\big)+\sum_{n=1}^{j-2}\frac{\lambda_{n+1}-\lambda_n}{\Lambda_j}f_j \big(\tfrac{\Lambda_{n}}{\Lambda_j}\big)\Big).
}
Thus by $f_j(0)=y_1$ and $f_j(\tfrac{\Lambda_n}{\Lambda_j})=\chi_{y,\psi}(\tfrac{\Lambda_n}{\Lambda_j})$, we have
\Eq{*}{
\int_{0}^1 C_j(x) dx &\le \frac{\lambda_1}{\Lambda_j} y_1+\Hc\M(\lambda) \Big( \frac{\lambda_1}{\Lambda_j}y_1+\sum_{n=1}^{j-2}\frac{\lambda_n}{\Lambda_j} \chi_{y,\psi} \big(\tfrac{\Lambda_{n}}{\Lambda_j}\big)+\sum_{n=1}^{j-2}\frac{\lambda_{n+1}-\lambda_n}{\Lambda_j}\chi_{y,\psi} \big(\tfrac{\Lambda_{n}}{\Lambda_j}\big)\Big)\\
&\qquad + \sum_{n=1}^{j-1} \frac{\lambda_{n+1}-\lambda_n}{\Lambda_j}\cdot \Mm_{i=1}^{n} \Big(\chi_{y,\psi} \big(\tfrac{\Lambda_{i-1}}{\Lambda_j}\big),\lambda_i\Big).
}
Furthermore monotonicity of $\chi_{y,\psi}$ implies
\Eq{*}{
\sum_{n=1}^{j-2}\frac{\lambda_n}{\Lambda_j}\chi_{y,\psi} \big(\tfrac{\Lambda_{n}}{\Lambda_j}\big) \le
\sum_{n=1}^{j-2} \int_{\frac{\Lambda_{n-1}}{\Lambda_j}}^{\frac{\Lambda_n}{\Lambda_j}} \chi_{y,\psi}(t)\:dt\le
\int_0^1 \chi_{y,\psi}(t)\:dt=\int_0^{\Psi_M} \chi_{y,\psi}(t)\:dt=\sum_{m=1}^M \psi_my_m.
}
Thus, by \eq{Fin1},
\Eq{Aprest}{
\sum_{m=1}^{M} \psi_m \cdot \Mm_{i=1}^m \big(y_i,\psi_i\big) 
&\le P_j+\Hc\M(\lambda)Q_j+R_j+\Hc\M(\lambda) \sum_{m=1}^M \psi_my_m,
}
where
\Eq{*}{
P_j&:=(1+\Hc\M(\lambda)) \frac{\lambda_1}{\Lambda_j} y_1,\\
Q_j&:=\sum_{n=1}^{j-2}\frac{\lambda_{n+1}-\lambda_n}{\Lambda_j}\chi_{y,\psi} \big(\tfrac{\Lambda_{n}}{\Lambda_j}\big),\\
R_j&:=\sum_{n=1}^{j-1} \frac{\lambda_{n+1}-\lambda_n}{\Lambda_j}\cdot \Mm_{i=1}^{n} \Big(\chi_{y,\psi} \big(\tfrac{\Lambda_{i-1}}{\Lambda_j}\big),\lambda_i\Big).
}
We are going to prove that $\lim_{j \to \infty} P_j \le 0$, $\limsup_{j \to \infty} Q_j \le 0$ and $\limsup_{j \to \infty} R_j \le 0$. Having this we take $\limsup$ as $j \to \infty$ in \eq{Aprest} and, by subadditivity of $\limsup$, we obtain desired inequality.

As $\Lambda_j \to \infty$, we have $\lim\limits_{j \to \infty} P_j=0$. In the second term we obtain, by Abel's summation formulae,
\Eq{*}{
Q_j=\frac{\lambda_{j-1}}{\Lambda_j} \chi_{y,\psi}\big( \tfrac{\Lambda_{j-2}}{\Lambda_j}\big)-\frac{\lambda_1}{\Lambda_j} \chi_{y,\psi}\big(\tfrac{\lambda_1}{\Lambda_j}\big) +\sum_{k=2}^{j-2} \frac{\lambda_k}{\Lambda_j} \Big( \chi_{y,\psi}\big(\tfrac{\Lambda_{k-1}}{\Lambda_j}\big)- \chi_{y,\psi}\big(\tfrac{\Lambda_k}{\Lambda_j}\big)\Big)
}
As $y$ is nonincreasing we get $\chi_{y,\psi}\big(\tfrac{\Lambda_{k-1}}{\Lambda_j}\big)-\chi_{y,\psi}\big(\tfrac{\Lambda_k}{\Lambda_j}\big)\ge 0$ and
\Eq{*}{
\sum_{k=2}^{j-2} \frac{\lambda_k}{\Lambda_j} \Big( \chi_{y,\psi}\big(\tfrac{\Lambda_{k-1}}{\Lambda_j}\big)-\chi_{y,\psi}\big(\tfrac{\Lambda_k}{\Lambda_j}\big)\Big) 
&\le\Big( \max_{k\in\{2,\dots,j-2\}} \frac{\lambda_k}{\Lambda_j}\Big) \sum_{k=2}^{j-2}  \Big(\chi_{y,\psi}\big(\tfrac{\Lambda_{k-1}}{\Lambda_j}\big)-\chi_{y,\psi}\big(\tfrac{\Lambda_k}{\Lambda_j}\big)\Big)\\
&= \Big(\max_{k\in\{2,\dots,j-2\}} \frac{\lambda_k}{\Lambda_j}\Big) \Big(\chi_{y,\psi}\big(\tfrac{\lambda_1}{\Lambda_j}\big)-\chi_{y,\psi}\big(\tfrac{\Lambda_{j-1}}{\Lambda_j}\big)\Big)\\
&\le \max_{k\in\{1,\dots,j\}} \frac{\lambda_k}{\Lambda_j} \chi_{y,\psi}\big(\tfrac{\lambda_1}{\Lambda_j}\big).
}
As $y$ is nonincreasing we obtain $\sup \chi_{y,\psi}=y_1$, thus
\Eq{*}{
Q_j \le 2 y_1 \max_{k\in\{1,\dots,j\}} \frac{\lambda_k}{\Lambda_j}.
}
As $\lambda_j/\Lambda_j \to 0$ and $\Lambda_j \to \infty$ we obtain, by Lemma~\ref{lem:maxk1n}, that the limit on the right hand side tends to $0$ therefore
$\limsup\limits_{j \to \infty} Q_j \le 0$. 

To estimate the upper limit of $(R_j)$ observe that, as $f$ in nonincreasing and $\M$ is monotone, by Lemma~\ref{lem:decr} the mapping $n \mapsto \Mm_{i=1}^{n} \big(\chi_{y,\psi}\big(\tfrac{\Lambda_{i-1}}{\Lambda_j}\big),\lambda_i\big)$ is nonincreasing. Therefore we can apply Abel's summation formula again to establish the inequality $\limsup\limits_{j \to \infty} R_j \le 0$. 

Finally, if we consider the upper limit as $j \to \infty$ in \eq{Aprest}, we obtain \eq{E:1gamma}.


\begin{thebibliography}{10}

\bibitem{Baj58}
M.~Bajraktarević.
\newblock {Sur une équation fonctionnelle aux valeurs moyennes}.
\newblock {\em Glasnik Mat.-Fiz. Astronom. Društvo Mat. Fiz. Hrvatske Ser.
  II}, 13:243–248, 1958.

\bibitem{Baj63}
M.~Bajraktarević.
\newblock {Sur une généralisation des moyennes quasilinéaires}.
\newblock {\em Publ. Inst. Math. (Beograd) (N.S.)}, 3 (17):69–76, 1963.

\bibitem{BerDoe15}
F.~Bernstein and G.~Doetsch.
\newblock {Zur {T}heorie der konvexen {F}unktionen}.
\newblock {\em Math. Ann.}, 76(4):514–526, 1915.

\bibitem{Car32}
T.~Carleman.
\newblock {Sur les fonctions quasi-analitiques}.
\newblock {\em Conférences faites au cinquième congrès des mathématiciens
  scandinaves, Helsinki}, page 181–196, 1932.

\bibitem{Cop27}
E.~T. Copson.
\newblock {Note on series of positive terms}.
\newblock {\em J. London Math. Soc.}, 2(1):9–12, 1927.

\bibitem{Dar71b}
Z.~Daróczy.
\newblock {A general inequality for means}.
\newblock {\em Aequationes Math.}, 7(1):16–21, 1971.

\bibitem{Dar72b}
Z.~Daróczy.
\newblock {Über eine {K}lasse von {M}ittelwerten}.
\newblock {\em Publ. Math. Debrecen}, 19:211–217 (1973), 1972.

\bibitem{DarLos70}
Z.~Daróczy and L.~Losonczi.
\newblock {Über den {V}ergleich von {M}ittelwerten}.
\newblock {\em Publ. Math. Debrecen}, 17:289–297 (1971), 1970.

\bibitem{DarPal82}
Z.~Daróczy and Zs. Páles.
\newblock {On comparison of mean values}.
\newblock {\em Publ. Math. Debrecen}, 29(1-2):107–115, 1982.

\bibitem{DarPal83}
Z.~Daróczy and Zs. Páles.
\newblock {Multiplicative mean values and entropies}.
\newblock In {\em {Functions, series, operators, Vol. I, II (Budapest, 1980)}},
  page 343–359. North-Holland, Amsterdam, 1983.

\bibitem{DunMcg03}
J.~Duncan and C.~M. McGregor.
\newblock {Carleman's inequality}.
\newblock {\em Amer. Math. Monthly}, 110(5):424–431, 2003.

\bibitem{Ell26}
E.~B. Elliott.
\newblock {A simple exposition of some recently proved facts as to
  convergency}.
\newblock {\em J. London Math. Soc.}, 1(2):93–96, 1926.

\bibitem{Gin38}
C.~Gini.
\newblock {{D}i una formula compressiva delle medie}.
\newblock {\em Metron}, 13:3–22, 1938.

\bibitem{Har20}
G.~H. Hardy.
\newblock {Note on a theorem of {H}ilbert}.
\newblock {\em Math. Zeitschrift}, 6:314–317, 1920.

\bibitem{HarLitPol34}
G.~H. Hardy, J.~E. Littlewood, and G.~Pólya.
\newblock {\em {Inequalities}}.
\newblock Cambridge University Press, Cambridge, 1934.
\newblock (first edition), 1952 (second edition).

\bibitem{Ked94}
K.~S. Kedlaya.
\newblock {Proof of a mixed arithmetic-mean, geometric-mean inequality}.
\newblock {\em Amer. Math. Monthly}, 101(4):355–357, 1994.

\bibitem{Ked99}
K.~S. Kedlaya.
\newblock {Notes: {A} {W}eighted {M}ixed-{M}ean {I}nequality}.
\newblock {\em Amer. Math. Monthly}, 106(4):355–358, 1999.

\bibitem{Kno28}
K.~Knopp.
\newblock {Über {R}eihen mit positiven {G}liedern}.
\newblock {\em J. London Math. Soc.}, 3:205–211, 1928.

\bibitem{KufMalPer07}
A.~Kufner, L.~Maligranda, and L.E. Persson.
\newblock {\em {The Hardy Inequality: About Its History and Some Related
  Results}}.
\newblock Vydavatelskỳ servis, 2007.

\bibitem{Lan21}
E.~Landau.
\newblock {A note on a theorem concerning series of positive terms}.
\newblock {\em J. London Math. Soc.}, 1:38–39, 1921.

\bibitem{Los70a}
L.~Losonczi.
\newblock {Über den {V}ergleich von {M}ittelwerten die mit
  {G}ewichtsfunktionen gebildet sind}.
\newblock {\em Publ. Math. Debrecen}, 17:203–208 (1971), 1970.

\bibitem{Los71a}
L.~Losonczi.
\newblock {Subadditive {M}ittelwerte}.
\newblock {\em Arch. Math. (Basel)}, 22:168–174, 1971.

\bibitem{Los71c}
L.~Losonczi.
\newblock {Subhomogene {M}ittelwerte}.
\newblock {\em Acta Math. Acad. Sci. Hungar.}, 22:187–195, 1971.

\bibitem{Los71b}
L.~Losonczi.
\newblock {Über eine neue {K}lasse von {M}ittelwerten}.
\newblock {\em Acta Sci. Math. (Szeged)}, 32:71–81, 1971.

\bibitem{Los73a}
L.~Losonczi.
\newblock {General inequalities for nonsymmetric means}.
\newblock {\em Aequationes Math.}, 9:221–235, 1973.

\bibitem{Los77}
L.~Losonczi.
\newblock {Inequalities for integral mean values}.
\newblock {\em J. Math. Anal. Appl.}, 61(3):586–606, 1977.

\bibitem{Pas1812}
P.~Pasteczka.
\newblock Weakening of the {H}ardy property for means.
\newblock {\em Bull. Austr. Math. Soc.}, 101(1):118--129, 2020.

\bibitem{PecSto01}
J.~E. Pečarić and K.~B. Stolarsky.
\newblock {Carleman's inequality: history and new generalizations}.
\newblock {\em Aequationes Math.}, 61(1–2):49–62, 2001.

\bibitem{Pal82a}
Zs. Páles.
\newblock {Characterization of quasideviation means}.
\newblock {\em Acta Math. Acad. Sci. Hungar.}, 40(3-4):243–260, 1982.

\bibitem{Pal83b}
Zs. Páles.
\newblock {On complementary inequalities}.
\newblock {\em Publ. Math. Debrecen}, 30(1-2):75–88, 1983.

\bibitem{Pal84a}
Zs. Páles.
\newblock {Inequalities for comparison of means}.
\newblock In W.~Walter, editor, {\em {General Inequalities, 4 (Oberwolfach,
  1983)}}, volume~71 of {\em {International Series of Numerical Mathematics}},
  page 59–73. Birkhäuser, Basel, 1984.

\bibitem{Pal85a}
Zs. Páles.
\newblock {Ingham {J}essen's inequality for deviation means}.
\newblock {\em Acta Sci. Math. (Szeged)}, 49(1-4):131–142, 1985.

\bibitem{Pal88a}
Zs. Páles.
\newblock {General inequalities for quasideviation means}.
\newblock {\em Aequationes Math.}, 36(1):32–56, 1988.

\bibitem{Pal88d}
Zs. Páles.
\newblock {On a {P}exider-type functional equation for quasideviation means}.
\newblock {\em Acta Math. Hungar.}, 51(1-2):205–224, 1988.

\bibitem{Pal88e}
Zs. Páles.
\newblock {On homogeneous quasideviation means}.
\newblock {\em Aequationes Math.}, 36(2-3):132–152, 1988.

\bibitem{PalPas16}
Zs. Páles and P.~Pasteczka.
\newblock {Characterization of the {H}ardy property of means and the best
  {H}ardy constants}.
\newblock {\em Math. Inequal. Appl.}, 19(4):1141–1158, 2016.

\bibitem{PalPas18b}
Zs. Páles and P.~Pasteczka.
\newblock {On {K}edlaya-type inequalities for weighted means}.
\newblock {\em J. Inequal. Appl.}, pages Paper No. 99, 22, 2018.

\bibitem{PalPas18a}
Zs. Páles and P.~Pasteczka.
\newblock {On the best {H}ardy constant for quasi-arithmetic means and
  homogeneous deviation means}.
\newblock {\em Math. Inequal. Appl.}, 21(2):585–599, 2018.

\bibitem{PalPas19b}
Zs. Páles and P.~Pasteczka.
\newblock {On {H}ardy-type inequalities for weighted means}.
\newblock {\em Banach J. Math. Anal.}, 13(1):217–233, 2019.

\bibitem{PalPas19a}
Zs. Páles and P.~Pasteczka.
\newblock {On the homogenization of means}.
\newblock {\em Acta Math. Hungar.}, 159(2):537–562, 2019.

\bibitem{PalPas20}
Zs. Páles and P.~Pasteczka.
\newblock {On {H}ardy type inequalities for weighted quasideviation means}.
\newblock {\em Math. Inequal. Appl.}, 23(3):971–990, 2020.

\bibitem{PalPer04}
Zs. Páles and L.-E. Persson.
\newblock {Hardy type inequalities for means}.
\newblock {\em Bull. Austr. Math. Soc.}, 70(3):521–528, 2004.

\end{thebibliography}
\end{document}